\newcommand{\bs}{\, \backslash \,}
\newcommand{\Gh}{\widehat{\Gamma}}
\newcommand{\Db}{\overline{\Delta}}
\newcommand{\Gg}{X}
\newcommand{\Gs}{\mathcal{G}}
\newcommand{\Hs}{\mathcal{H}}
\newcommand{\Ls}{\mathcal{L}}
\newcommand{\Zeta}{\upzeta}
\newcommand{\Chi}{\raisebox{0.175em}{$\upchi$}}
\newcommand{\ggx}{(\Gs,\Xi)}
\newcommand{\gxt}{(\Gs,\Xi,\Theta)}
\newcommand{\am}[1][\upchi]{\ast_{#1}}
\newcommand{\hgx}{(\Hs,\Upsilon)}
\newcommand{\hys}{(\Hs,\Upsilon,\Sigma)}
\newcommand{\lxt}{(\Ls,\Zeta, \Zeta_0)}
\newcommand{\PP}[1][\ggx]{\Pi_1{#1}}
\newcommand{\PT}[1][\gxt]{\Pi_1{#1}}
\newcommand{\PA}[1][\ggx]{\Pi_1^{\mathrm{abs}} #1}
\newcommand{\PAT}[1][\gxt]{\Pi_1^{\mathrm{abs}} #1}
\newcommand{\GU}{G / U}
\newcommand{\Ker}{\operatorname{Ker}}
\newcommand{\Stab}[2][]{\operatorname{Stab}_{#1}(#2)}
\newcommand{\Z}{\mathbb{Z}}
\newcommand{\Fp}{\mathbb{F}_p}
\newcommand{\pZ}{\widehat{\Z}}
\newcommand{\at}[1]{|_{#1}}
\newcommand{\Int}{\operatorname{Int}}
\newcommand{\wF}{\overline{\Phi}}
\newcommand{\wE}[1][\Xi]{\widetilde{E}(#1)}
\newcommand{\edge}[1][i]{ $\eta_{#1}$}
\newcommand{\vertexpower}[2][i]{ $\Gs(\tau_{#1})^{\delta(\eta_{#2})}$}
\newcommand{\vertexs}[1][i]{ $\Gs(\sigma_{#1})$}
\newcommand{\vertexpowers}[2][i]{ $l_{\eta_{#2}}^{-1}\Gs(\sigma_{#1})l_{\eta_{#2}}$}
\newcommand{\sge}[1][{t_e}]{\overline{\langle #1 \rangle}}
\newtheorem{theorem}{Theorem}
\numberwithin{theorem}{section}
\numberwithin{equation}{section}
\newtheorem{definition}{Definition}
\newtheorem{lemma}[theorem]{Lemma}
\newtheorem{proposition}[theorem]{Proposition}
\newtheorem{remark}[theorem]{Remark}
\newtheorem{corollary}[theorem]{Corollary}
\newtheorem{question}{Question}
\title[Profinite Amalgamated Factors]{Profinite Subgroup Accessibility and Recognition of Amalgamated Factors}
\author{Julian Wykowski}
\date{\today}
\address{Department of Pure Mathematics and Mathematical Statistics, Centre for Mathematical Sciences, Wilberforce Road, Cambridge CB3 0WA}
\email{jw2006@cam.ac.uk}
\definecolor{imperiallight}{RGB}{24,142,179}
\definecolor{grn}{RGB}{68, 179, 24}
\definecolor{purple}{RGB}{129,39,137}
\begin{document}
\begin{abstract}
   We investigate \textit{accessible subgroups} of a profinite group $G$, i.e. subgroups $H$ appearing as vertex groups in a graph of profinite groups decomposition of $G$ with finite edge groups. We prove that any accessible subgroup $H \leq G$ arises as the kernel of a continuous derivation of $G$ in a free module over its completed group algebra. This allows us to deduce splittings of an abstract group from splittings of its profinite completion. We prove that any finitely generated subgroup $\Delta$ of a finitely generated virtually free group $\Gamma$ whose closure is a factor in a profinite amalgamated product $\Gh = \Db \amalg_K L$ along a finite $K$ must be a factor in an amalgamated product $\Gamma = \Delta \am \Lambda$ along some $\Chi \cong K$. This extends previous results of Parzanchevski--Puder, Wilton and Garrido--Jaikin-Zapirain on free factors.
\end{abstract}
\maketitle
\section{Introduction}
The structure of profinite groups is as intriguing as it is elusive. As Galois groups, profinite groups appear throughout mathematics, and it is of great interest in fields such as number theory or algebraic geometry to shed light on the peculiar structural properties of these objects. Within group theory, particular focus lies on the study of \emph{profinite rigidity}, that is, the question to what extent the structure of an abstract group $\Gamma$ is visible in its finite quotients. More concisely, this problem can be rephrased as the question of how much the structure of an abstract group $\Gamma$ is visible in its profinite completion $\Gh$, the latter being a profinite group comprised of the inverse system of the finite quotients of $\Gamma$ and their epimorphisms.

A natural question regarding the structure of a mathematical object is whether it can be decomposed via the canonical constructions in the category it inhabits, that is, limits and colimits. While limits commute with the forgetful functor from the profinite to the abstract category, colimits do not: already the coproduct of two profinite groups, their profinite free product, can significantly outgrow their coproduct as abstract groups, the free product. This makes the study of profinite coproduct decompositions both harder and more interesting. A larger class of colimit decompositions is given by \emph{graph of groups splittings}, which are related via Bass--Serre theory to a group's ability to act on trees. In recent years, a beautiful profinite analogue of this geometric theory---profinite groups acting on profinite trees---has come to fruition, albeit in a far more obscure form than its abstract sibling. This development has brought about answers as well as questions.

It is natural then to ask whether profinite decompositions of the profinite completion $\Gh$ of a residually finite group $\Gamma$ must arise from abstract decompositions of $\Gamma$. In other words: can one recognise decompositions of $\Gamma$ in its finite quotients? The restriction to the class of residually finite groups is necessary here to ensure that $\Gamma$ embeds into its profinite completion $\Gh$ as a subgroup. A celebrated instance of this general question, attributed to Remeslennikov (cf. \cite[Question 15]{Remeslennikov}), asks whether there exists a finitely generated residually finite group which is not free but whose profinite completion is free profinite. The solution remains out of sight at the time being. However, in recent years, the related question of whether a subgroup $\Delta \leq \Gamma$ whose closure $\Db$ in $\Gh$ is a factor in a profinite decomposition must itself arise as a factor in an abstract decomposition of $\Gamma$ has gained considerable attention. In the case of free factors of finitely generated free groups, this was answered positively by Parzanchevski and Puder in \cite{Parzanchevski_Puder} and by Wilton in \cite{Wilton_Free_Factors}. Last year, Garrido and Jaikin-Zapirain generalised in \cite{Garrido_Jaikin} this result to free factors of finitely generated virtually free groups, providing a Bass--Serre theoretic proof.

Thus, one is tempted to extend the overarching question to a more general class of decompositions. The natural next step would be to consider free products with amalgamation, which arise for instance as the fundamental groups of unions of path-connected spaces with path-connected intersection. Given a residually finite group $\Gamma$ which splits as a free product $\Gamma = \Delta \am \Lambda$ with amalgamation along a finite $\Chi$, it follows from the definition of pushouts (cf. \cite[Excercise 9.2.7(3)]{RZ}) that $\Gh = \Db \amalg_{\upchi} \overline{\Lambda}$, where $\amalg$ denotes the profinite amalgamated product, and $\Db$ and $\overline{\Lambda}$ denote the closures of $\Delta$ and $\Lambda$ in $\Gh$, respectively. We ask here the converse direction of this implication, which is considerably more involved and vastly unanswered. We remark the particular significance of this question in light of the recent discovery (cf. \cite[Corollary 1.2]{property_fa}) that splitting as an amalgam is not a profinite invariant among finitely presented residually finite groups.
\begin{question}\label{Question}
    Let $\Gamma$ be a finitely generated residually finite group and $\Delta \leq \Gamma$ a finitely generated subgroup. Assume that $\Gh = \Db \amalg_K L$ for some profinite groups $K,L$. Are there abstract groups $\Chi, \Lambda$ such that $\Gamma = \Delta \am \Lambda$ and $\Chi \cong K$?
\end{question}
The theory of Bass and Serre is not unique among those which infer the structure of a group from its ability to act on other mathematical objects. The topologically inspired---yet entirely algebraic---theory of groups acting on modules, group cohomology, has proven instrumental in the study of groups in the last century. In 1979, Dunwoody related these two theories in an elegant result, which states that a subgroup $\Delta$ of a group $\Gamma$ is contained in the kernel of a non-trivial derivation of $\Gamma$ in its group ring $R[\Gamma]$ if and only if $\Delta$ is \emph{accessible within} $\Gamma$, i.e. $\Delta$ is contained in the stabiliser of a vertex under an action of $\Gamma$ on a tree with finite edge stabilisers. We introduce the following analogous definition for profinite groups.
\begin{definition}
Let $G$ be a profinite group. A closed subgroup $H \leq G$ is \emph{accessible within $G$} if $G = \PT$ for some graph of profinite groups $\gxt$ over a finite graph $\Xi$ such that all edge groups $\Gs(e)$ for $e \in E(\Xi)$ are finite and there exists a vertex $v \in \Gs(v)$ with $H = \Gs(v)$.
\end{definition}
When necessary to specify the graph of groups, we shall say that $H$ is \emph{accessible within $G$ via} $\gxt$. We refer the reader to Section \ref{Sec::Prelim:Trees} for details on graphs of profinite groups. The cohomology theory of profinite groups emerges analogously to that of abstract groups, and in the case of a profinite completion $\iota \colon \Gamma \to \Gh$, one may relate the corresponding theories for $\Gamma$ and $\Gh$ in a variety of ways. This was exploited partially by Garrido--Jaikin-Zapirain in \cite{Garrido_Jaikin}, who used Dunwoody's theorem to construct an abstract graph of groups splitting using a continuous derivation obtained from a profinite free product. In this paper, we address the question of obtaining these continuous derivations for general graphs of profinite groups. We prove the following profinite analogue of the ``if'' direction in Dunwoody's result.

\begin{restatable}{theorem}{MTA}
\label{Thm::Subgroup Accessibility}
	Let $G$ be a profinite group and $H \leq G$ a closed subgroup accessible within $G$ via a graph of profinite groups $\gxt$ with $n$ edges. If $R$ is a finite ring whose characteristic does not divide the order of any edge group $\Gs(e)$ for $e \in E(\Xi)$, then there exists a continuous derivation
	\[
	f \colon G \to R[[G]]^n
	\]
	satisfying $\Ker(f) = H$.
\end{restatable}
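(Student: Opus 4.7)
The plan is to construct $f$ so that $f(g)$ is the formal oriented geodesic from $v_0 \cdot g$ to a fixed vertex $v_0$ in the standard profinite tree $S$ of $\gxt$ (with $G$ acting on $S$ on the right), where by accessibility $v_0$ can be chosen with $\Stab[G]{v_0} = H$.

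First, I would assemble the continuous $R$-valued chain complex of $S$: set $C_1 := \bigoplus_{e \in E(\Xi)} R[[\Gs(e) \bs G]]$ and $C_0 := \bigoplus_{v \in V(\Xi)} R[[\Gs(v) \bs G]]$ as continuous right $R[[G]]$-modules, with the standard boundary $\partial \colon C_1 \to C_0$ sending an edge coset to the difference of its endpoint cosets. The key foundational input is that, because $S$ is a profinite tree and the hypothesis $|\Gs(e)| \neq 0$ in $R$ precludes torsion obstructions, the augmented sequence $0 \to C_1 \xrightarrow{\partial} C_0 \to R \to 0$ is short exact in continuous right $R[[G]]$-modules---the profinite-tree analogue of the classical fact that a tree has trivial first homology. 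In particular $\partial$ is injective, so for $u, v \in V(S)$ there is a unique $\gamma(u, v) \in C_1$ with $\partial \gamma(u, v) = v - u$, and continuity of $\partial$ together with compactness makes $\gamma$ continuous.

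Next, I would define $\tilde f \colon G \to C_1$ by $\tilde f(g) := \gamma(v_0 \cdot g, v_0)$. Equivariance of $\gamma$ and the formal-chain telescoping $\gamma(v_0 \cdot gg', v_0) = \gamma(v_0 \cdot gg', v_0 \cdot g') + \gamma(v_0 \cdot g', v_0)$---valid because both sides have the same image under the injective $\partial$---yield $\tilde f(gg') = \tilde f(g) \cdot g' + \tilde f(g')$, giving a continuous right derivation with $\Ker(\tilde f) = \{g \in G : v_0 \cdot g = v_0\} = H$. To land in $R[[G]]^n$, I would embed each summand $R[[\Gs(e) \bs G]]$ into $R[[G]]$ via $[\Gs(e) g] \mapsto \nu_e \cdot g$, where $\nu_e := \sum_{k \in \Gs(e)} k$; this realises the $\Gs(e)$-invariants of $R[[G]]$ and is well-defined ($k \nu_e = \nu_e$ for $k \in \Gs(e)$), continuous, a right $R[[G]]$-module morphism, and injective (distinct cosets have disjoint supports in $G$). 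Composing gives the required $f \colon G \to R[[G]]^n$.

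The main obstacle will be the first step: verifying exactness of the augmented chain complex with $R$-coefficients under the condition $|\Gs(e)| \neq 0$ in $R$, and continuity of $\gamma$. These rest on Ribes--Zalesskii's machinery---notably, realising $S$ as an inverse limit of standard trees of graphs of finite groups and bootstrapping Dunwoody's classical theorem through each finite level, with the edge-order hypothesis preventing non-trivial kernel from accumulating in the inverse limit.
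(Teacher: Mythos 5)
Your construction is essentially sound, and it takes a genuinely different route from the paper. The paper never touches the chain complex of the standard tree: it assembles the derivation generator-by-generator (Proposition \ref{Prop::Glue} glues prescribed derivations on vertex groups to prescribed values on stable letters), produces one derivation per vertex and per non-tree edge via Proposition \ref{Prop::Partial Subgroup Accessibility}, takes their product, and then proves $\Ker(f)=H$ by an approximation argument: an element $g \in \Ker(f) \setminus H$ is replaced, modulo a carefully chosen open normal subgroup, by an element of the dense abstract fundamental group with the same ``intricacy'', whose normal form is fed into a counting functional $\varepsilon_\Delta$ that evaluates to $|\Gs(e)|$ --- this is exactly where the hypothesis $|\Gs(e)| \neq 0$ enters. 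Your homological geodesic $\gamma(u,v) = \partial^{-1}(v-u)$ dissolves both of the paper's stated obstacles at once: no minimal subtree is needed (the chain $\partial^{-1}(v_0 - v_0 g)$ exists in $C_1$ even when $[v_0, v_0 g]$ is infinite), and the kernel computation reduces to $v_0 g = v_0$ by injectivity of $\partial$ and of $V(S) \hookrightarrow C_0$, with no normal forms. Note that, as written, your argument does not actually use $|\Gs(e)| \neq 0$ anywhere: your embedding $[\Gs(e)g] \mapsto \nu_e g$ is injective because the open normal $U$ with $\Gs(e) \cap U = 1$ are cofinal, not because of the order hypothesis. You should double-check this, but if it survives scrutiny your argument is strictly stronger than the paper's.

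The one step that needs repair is the one you flag as the main obstacle, the exactness of $0 \to C_1 \to C_0 \to R \to 0$. The statement is true, but not by the mechanism you sketch: the finite quotients $U \backslash S$ are connected finite graphs which are \emph{not} trees, so there is no ``Dunwoody's classical theorem at each finite level'' to bootstrap --- the level-wise boundary maps genuinely have kernels, and the assertion that these kernels vanish in the inverse limit is precisely the (nontrivial) statement that $S$ is a profinite tree. The clean route is: $S$ is a $\pZ$-tree by \cite[Corollary 6.3.6]{Ribes_Graphs}, so the augmented $\pZ$-sequence is exact; it splits as a sequence of profinite abelian groups because $\pZ$ is projective over itself; hence applying $- \mathbin{\widehat{\otimes}_{\pZ}} R$ preserves exactness and yields your sequence for any finite $R$ (the edge-order hypothesis plays no role here either). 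You should also invoke Corollary \ref{Cor::profinite graph of profinite groups} to know that the graph of profinite groups is injective, so that $\Stab[G]{v_0}$ is genuinely $H$ rather than a proper quotient of it.
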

The difficulty in erecting such a derivation in the profinite setting is that, first, the minimal subtree of a profinite tree containing two vertices may be infinite, so Dunwoody's construction using almost invariant sets does not carry over, and second, an arbitrary element of a graph of profinite groups does not have a normal form, so precise control over the derivation's kernel becomes a question of asymptotics within a profinite group. We settle the first issue in the proof of Proposition~\ref{Prop::Partial Subgroup Accessibility}, and dedicate the proof of Theorem~\ref{Thm::Subgroup Accessibility} to the resolution of the second issue.
 
Having established Theorem~\ref{Thm::Subgroup Accessibility}, we aim to utilise it together with the abstract opposite direction to deduce splittings of an abstract group from splittings of its profinite completion. In \cite{Garrido_Jaikin}, Garrido and Jaikin-Zapirain follow this strategy in the case of free products. They start with a finitely generated virtually free group $\Gamma$ and a finitely generated subgroup $\Delta$ whose closure in $\Gh$ is a factor in a profinite free splitting $\Gh = \Db \amalg L$. Using a direct sum decomposition of augmentation ideals inherent to free products, they obtain a continuous derivation $f$ of $\Gh$ in its completed group algebra $R[[\Gh]]$ for a finite ring $R$. Restricting $f$ to $\Gamma$, they prove using homological algebra that the image $f(\Gamma)$ lies in a projective $R[\Gamma]$-module. Dunwoody's theorem then yields a graph of groups decomposition of $\Gamma$, which they mould to the form $\Gamma = \Delta \am[] \Lambda$ for some $\Lambda \leq \Gamma$.

In this paper, we employ a similar idea to attack Question \ref{Question}. The difficulty in generalising to free products with non-trivial amalgamation is twofold. First, there no longer exists a direct sum decomposition of augmentation ideals, so the continuous derivation of $\Gh$ must be obtained by alternative means. We do this in Theorem~\ref{Thm::Subgroup Accessibility} for all graphs of groups. The second difficulty lies in the increased complexity of Bass--Serre theoretic manipulations necessary to retrieve the desired decomposition. This we accomplish in Section \ref{Sec::Amalgams}. Thus, we prove the following result, which answers Question \ref{Question} positively in the class of virtually free groups\footnote{This question was also asked by Ian Agol at the \textit{Workshop on Profinite Rigidity} held at the ICMAT in Madrid, June 2023.}.

\begin{restatable}{theorem}{MTB}
	\label{Thm::Recognition}
	Let $\Gamma$ be a finitely generated virtually free group, let $\Delta \leq \Gamma$ be a finitely generated subgroup and let $K \leq \Gh$ be a finite subgroup of the profinite completion. The following are equivalent:
	\begin{enumerate}[(a)]
		\item there exists a closed subgroup $L \leq \Gh$ such that $\Gh = \Db \amalg_K L$;
		\item there exist subgroups $\Lambda, \Chi \leq \Gamma$ and $h \in \Db$ with $\Gamma = \Delta \am \Lambda$ and $\Chi = h^{-1}Kh$.
	\end{enumerate}
\end{restatable}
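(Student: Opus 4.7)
The $(b) \Rightarrow (a)$ direction is immediate from the universal property of the profinite completion applied to the pushout, as noted before Question \ref{Question}. For $(a) \Rightarrow (b)$ the plan is to adapt the strategy of Garrido and Jaikin-Zapirain from \cite{Garrido_Jaikin}, substituting Theorem \ref{Thm::Subgroup Accessibility} for their free-product-specific construction of a continuous derivation, and then carrying out the more involved Bass--Serre bookkeeping dictated by a non-trivial amalgam.

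View $\Gh = \Db \amalg_K L$ as a graph of profinite groups over a segment with finite edge group $K$, its vertex groups $\Db$ and $L$ being strongly complete (since $\Gh$ is virtually free profinite, so every topologically finitely generated closed subgroup is strongly complete by Nikolov--Segal). Choose a finite ring $R$ in which $|K|$ is invertible, e.g.\ $R = \mathbb{F}_p$ for a prime $p \nmid |K|$. Theorem \ref{Thm::Subgroup Accessibility} then yields a continuous right derivation $f \colon \Gh \to R[[\Gh]]$ with $\Ker(f) = \Db$. Restricting $f$ to $\Gamma$ and using that finitely generated virtually free groups are LERF (so $\Gamma \cap \Db = \Delta$), one obtains an abstract derivation $f_0 \colon \Gamma \to R[[\Gh]]$ with $\Ker(f_0) = \Delta$. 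A homological-algebraic argument in the spirit of \cite{Garrido_Jaikin}---exploiting the invertibility of $|K|$ in $R$ together with the virtual freeness of $\Gamma$ to dissolve cohomological obstructions---should then show that $f_0(\Gamma)$ lies inside a projective $R[\Gamma]$-submodule of $R[[\Gh]]$. Dunwoody's classical theorem now delivers a graph of abstract groups decomposition of $\Gamma$ with finite edge groups in which $\Delta$ fixes a vertex.

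The last and most delicate step, which is to be carried out in Section \ref{Sec::Amalgams}, is to reshape this abstract decomposition into the precise segment $\Gamma = \Delta \am \Lambda$ with $\Chi = h^{-1}Kh$ for some $h \in \Db$. The Dunwoody splitting may have many vertices, the vertex group containing $\Delta$ may properly contain $\Delta$, and its edge groups are only known to be finite. My plan is first to fold and collapse the Dunwoody splitting---using the finite generation of $\Delta$ and the Stallings structure of finitely generated virtually free groups---to promote $\Delta$ to a vertex group and contract the complementary subgraph of groups to a single vertex $\Lambda$, thereby producing a finite amalgamating subgroup $\Chi$. Then I would pass to profinite completions and compare the resulting abstract segment with the given profinite amalgam, exploiting the conjugacy-rigidity of finite edge stabilisers in profinite trees to match $\Chi$ with a $\Db$-conjugate of $K$. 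I expect the main obstacle to be this final comparison step, where the abstract Bass--Serre picture must be reconciled with the given profinite amalgam precisely enough to extract the explicit conjugating element $h \in \Db$.
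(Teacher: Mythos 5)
Your first half coincides with the paper's argument: the derivation $f$ with $\Ker(f)=\Db$ from Theorem \ref{Thm::Subgroup Accessibility} (in the paper, via Corollary \ref{Cor::Amalgam}), restriction to $\Gamma$ using LERF, Theorem \ref{Thm::Homological Algebra} to land in a projective $\Fp[\Gamma]$-module, and a Dunwoody-type theorem to split $\Gamma$. One small correction there: the paper invokes Dicks' refinement (Theorem \ref{Thm::Dicks_Dunwoody}), which makes $\Delta$ \emph{equal} to a vertex group, so your later worry that ``the vertex group containing $\Delta$ may properly contain $\Delta$'' does not arise if you cite the right statement. (Also, your justification that $L$ is strongly complete via Nikolov--Segal is shaky, since a closed subgroup of a topologically finitely generated profinite group need not be topologically finitely generated; but the paper does not dwell on this either.)

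The genuine gap is in your final step. The vertex $v$ carrying $\Delta$ may have several incident edges whose finite edge groups are a priori unrelated, and no amount of abstract folding or collapsing of the complementary subgraph will merge these into a \emph{single} amalgamated product $\Gamma = \Delta \am \Lambda$: collapsing the complement to one vertex leaves a multi-edge at $v$, i.e.\ an iterated amalgam/HNN, not a segment. Your plan to ``compare with the profinite amalgam at the end'' cannot repair this, because the comparison is needed \emph{during} the reshaping, not after. The paper's resolution is: (i) use Lemma \ref{Lem::Bass--Serre Magic 1} (built on Theorem \ref{Thm::KPS}) to make every vertex group other than $\Delta$ finite; (ii) produce $\Chi = K^h \leq \Delta$ via Lemma \ref{Lem::K conjugate to Chi} together with Lemma \ref{Lem::Induced Topology}; (iii) show that every edge group incident at $v$ is conjugated into $\Chi$ by an element of $\Delta$ --- this uses Proposition \ref{Prop::Finite Group Profinite Tree} to fix a vertex of the profinite structure tree of $\Gh = \Db \amalg_K L$, the geodesic argument with the relative malnormality of $\Db$ (Lemma \ref{Lem::Relative Malnormality}), and, crucially, subgroup conjugacy separability of virtually free groups to convert conjugacy in $\Db$ into conjugacy in $\Delta$; only then (iv) rewire all edges at $v$ through a single new edge with group $\Chi$ and collapse. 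Steps (i), (iii) and the appeal to conjugacy separability are absent from your outline, and without them the segment $\Gamma = \Delta \am \Lambda$ with a single finite amalgam $\Chi$ conjugate to $K$ by an element of $\Db$ cannot be extracted.
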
 

The article is structured as follows. In Section \ref{Sec::Prelim}, we review definitions and preliminary results on the topics of abstract and profinite Bass--Serre theory and profinite group cohomology, which we shall require in our proofs.
In Section \ref{Sec::Access}, we construct certain continuous derivations of profinite groups which split as graphs of profinite groups. We prove a general result on the structure of continuous derivations of graphs of profinite groups in Proposition~\ref{Prop::Glue}, and utilise this result to demonstrate Proposition~\ref{Prop::Partial Subgroup Accessibility}, the profinite analogue of Dunwoody's result on the construction of derivations. Sharpening explicit control over the kernel of these derivations, we obtain Theorem~\ref{Thm::Subgroup Accessibility}.
Finally, Section \ref{Sec::Amalgams} is devoted to the proof of Theorem~\ref{Thm::Recognition}. The major effort lies in the Bass--Serre theoretic manipulations necessary to restore the required amalgamated splitting, which we split into a few lemmata. The substance is contained in the proofs of Lemma~\ref{Lem::Bass--Serre Magic 1} and Theorem~\ref{Thm::Recognition}.

\section*{Acknowledgements}
The author is grateful to his PhD supervisor, Gareth Wilkes, for suggesting this question and for invaluable discussions throughout the realisation of the project. Thanks are also due to Henry Wilton for an inspiring conversation on profinite acylindricity. Financially, the author was supported in his research by a Cambridge International Trust and King's College Scholarship. Part of this work was done while the author was visiting the Instituto de Ciencias Matem\'aticas, Madrid, for whose hospitality he is thankful. Finally, the author is grateful to the anonymous referee for helpful suggestions and for pointing out references which allowed for the shortening of some proofs in Section~\ref{Sec::Amalgams}.

\section{Preliminaries}\label{Sec::Prelim}
In this section, we outline preliminary results on the topics of profinite groups acting on profinite trees and the homological algebra of profinite groups. In the interest of brevity, we outline only the key results used in our proofs in later sections. For more detailed expositions of these topics, we invite the reader to \cite{Zalesskii_Melnikov}, \cite{Ribes_Graphs}, \cite{Pro_P_Bass_Serre} and \cite{Wilkes_PhD}, as well as \cite{RZ}, \cite{Serre_Cohomology} and \cite{Gille_Szamuely}, respectively.

\subsection{Profinite Groups Acting on Profinite Trees}
\label{Sec::Prelim:Trees}
A \textit{profinite graph} $X$ is a compact, Hausdorff and totally disconnected space equipped with a distinguished closed subspace $V(X) \subseteq X$, called the \emph{vertex set}, as well as two continuous maps $d_0, d_1 \colon X \to V(X)$ satisfying $d_0\at{V(X)} = d_1\at{V(X)} = \operatorname{id}_{V(X)}$. A \emph{quasi-morphism} of graphs $f \colon X \to Y$ is a continuous map which commutes with $d_0$ and $d_1$. We say that $X$ is \emph{connected} if every finite image of $X$ under a quasi-morphism is connected as an abstract graph.
By \cite[Proposition 2.1.10]{Ribes_Graphs}, any profinite graph $X$ has a unique decomposition into its connected components, that is, maximal connected profinite subgraphs. Recall further that a connected profinite graph $\Gg$ is a \emph{tree} if the sequence
\begin{equation}
    0 \to \pZ[[(E^*(\Gg),\ast)]] \xrightarrow{d} \pZ[[V(\Gg)]] \xrightarrow{\varepsilon} \pZ \to 0
\end{equation} is exact; here, $\pZ[[Y]]$ denotes the profinite free module on a (pointed) space $Y$, $E^*(\Gg) = (X / V(X),\ast)$ is the pointed quotient space with $\ast = [V(X)]$, and $d, \varepsilon$ are the morphisms of profinite modules generated by $d(e) = d_1(e) - d_0(e)$, $d(\ast) = 0$ and $\varepsilon(v) = 1$ for $e \in E(\Gg)$ and $v \in V(\Gg)$. By \cite[Proposition 2.4.9]{Ribes_Graphs}, the intersection of an arbitrary collection of subtrees of a tree $\Gg$ is either empty or a tree. It follows that for any two vertices $v,w \in V(\Gg)$, there exists a minimal subtree containing $v$ and $w$, which we shall denote as $[v,w] \subseteq \Gg$.

A profinite group $G$ is said to \emph{act} on a profinite graph $\Gg$ if there is a continuous group action $G \curvearrowright \Gg$ which commutes with the adjacency maps $d_0,d_1$. If $U$ is an open normal subgroup of $G$, we shall write $\pi_U \colon G \to \GU$ for the canonical projection of groups and $p_U \colon \Gg \to U \bs \Gg$ for the associated quotient of profinite graphs. In that case, the finite group $\GU$ acts naturally on the profinite graph $U \bs \Gg$ via $\pi_U(g) \cdot p_U(x) = p_U(g \cdot x)$. There exist appropriate definitions of graphs of profinite groups and their profinite fundamental groups over any profinite graph---this is done using sheaves of profinite groups, see for instance \cite[Sections 5-6]{Ribes_Graphs}. However, for the purposes of the present work, it shall be enough to carry out these definitions in the case where the underlying graph $X = \Xi$ is finite. A \emph{graph of profinite groups} $\ggx$ over a finite connected graph $\Xi$ consists of:
\begin{enumerate}[(a)]
    \item a collection of profinite groups $\{\Gs(x) : x \in \Xi\}$, referred to as the \emph{vertex} and \emph{edge groups}; and
    \item two collections of continuous injective homomorphisms of profinite groups $\{\partial_0 \colon \Gs(e) \to \Gs(d_0(e))\}_{e \in E(\Xi)}$ and $\{\partial_1 \colon \Gs(e) \to \Gs(d_1(e))\}_{e \in E(\Xi)}$, referred to as the \textit{edge inclusions}.
\end{enumerate}
Given a graph of profinite groups $\ggx$, we want to associate to it a fundamental group, which will be a group generated by the vertex groups and a collection of stable letters $T = \{t_e : e \in E(\Xi)\}$, indexed by the edges of $\Xi$, which are glued together with respect to the structure of $\Xi$. Specifically, choose a spanning tree $\Theta$ of $\Xi$ and consider the profinite group given by the profinite free product
\[
W \ggx =  \coprod_{v \in V(\Xi)} \Gs(\tau) \amalg \mathscr{F}(T)
\]
where $\mathscr{F}(T)$ denotes the free profinite group on the space $T$. Let $N \gxt \trianglelefteq W \ggx$ be the minimal closed normal subgroup containing the sets $\{t_e : e \in E(\Theta)\}$ and $\{\partial_1(g)^{-1}t_e^{-1}\partial_0(g)t_e : e \in E(\Xi), g \in \Gs(e)\}$. The \emph{profinite fundamental group of $\ggx$ with respect to $\Theta$} is defined as
\[
\PP = \frac{W\ggx}{N\gxt}
\]
which is profinite as $N\ggx$ is closed in $W\ggx$. Given a different choice of spanning tree $\Theta'$ of $\Xi$, there is an isomorphism
\begin{equation}
	\PT \cong \PT[(\Gs, \Xi, \Theta')]
\end{equation}
although this isomorphism may carry vertex groups to distinct conjugates. Thus, we shall write $\PP$ whenever we refer to the isomorphism type of this group, and specify a spanning tree only when we need to refer to the images of vertex groups in the fundamental group. Moreover, to distinguish this construction from the abstract fundamental group of a graph of groups $\ggx$, we shall write $\PA$ to denote the latter. As an example, consider the following graph of profinite groups $\ggx$ given by
\begin{center}
	\begin{tikzpicture}[decoration={markings, 
			mark= at position 0.53 with {\arrow{stealth}}}]
		\draw[fill=black] (-1,0) circle (1pt);
		\draw[fill=black] (1,0) circle (1pt);
		\node at (-1,-0.3) {$H$};
		\node at (1,-0.3) {$L$};
		\node at (0,0.3) {$K$};
		\draw [postaction={decorate}] (-1,0) -- (1,0);
	\end{tikzpicture}
\end{center}
where $H$ and $L$ are profinite groups and $K$ is a common subgroup. Write $G = \PP$ for its profinite fundamental group. Then $G = H \amalg_K L$ is the \emph{profinite amalgamated product} or \emph{profinite free product with amalgamation} along $K$, i.e. the pushout
\begin{equation}
\begin{tikzcd}
	K \arrow[d] \arrow[r] & H \arrow[d] \arrow[ddr, dotted, "\forall f_H", bend left = 30]& \\
	L  \arrow[drr, dotted, "\forall f_L", bend right = 30] \arrow[r] & G \arrow[dr, dotted, "\exists f"]& \\
	&& \forall A
\end{tikzcd}
\end{equation}
in the category of profinite groups. Unlike in the abstract setting, the canonical morphisms $\nu_x \colon \Gs(x) \to \PT$ for $x \in \Xi$ may not be injective: see \cite[Example 9.2.10]{RZ} for a counterexample. We say that $\gxt$ is an \emph{injective} graph of profinite groups if it is indeed the case that the canonical morphisms $\nu_x$ are injective for all $x \in \Xi$. We note that a graph of profinite groups which is not injective can be replaced with a natural construction which does form an injective graph of profinite groups: see \cite[Section 6.4]{Ribes_Graphs}. Moreover, as we shall see in Proposition \ref{Prop::profinite graph of profinite groups}, a graph of profinite groups over a finite graph with finite edge groups is automatically injective.

It is an exercise in the definition of pushouts (cf. \cite[Exercise 9.2.7(2)]{RZ}) to show that $\widehat{\Delta \am[K] \Lambda} \cong \widehat{\Delta} \amalg_K \widehat{\Lambda}$ whenever $\Delta, \Lambda$ are residually finite and $K$ is finite. More generally, it is sensible to ask whether the profinite completion functor commutes with the (abstract and profinite) fundamental group construction. The following proposition yields a precise topological description of the relation between the two constructions; it is obtained essentially from \cite[Proposition 6.5.1]{Ribes_Graphs}. The symbols $\trianglelefteq_f$ and $\trianglelefteq_o$ denote finite-index normal subgroups and open normal subgroups in a topological group, respectively.
\begin{proposition}\label{Prop::profinite graph of profinite groups}
    Let $\gxt$ be a graph of profinite groups over a finite graph $\Xi$. The profinite fundamental group $\PT$ is isomorphic to the completion of the abstract fundamental group $\PAT$ with respect to the profinite topology determined by the neighbourhood basis \[
    \mathcal{U} = \{N \trianglelefteq_f \PAT \mid \forall x\in \Xi, \, N \cap \Gs(x) \trianglelefteq_o \Gs(x)\}
    \] for the identity. If, additionally, all edge groups $\Gs(e)$ for $e \in E(\Xi)$ are finite, then $\ggx$ is an injective graph of profinite groups, and the completion map
    \[
        \iota \colon \PAT \to \PT
    \]
    is a monomorphism.
\end{proposition}
\begin{proof}
    The isomorphism between the profinite group $\PT$ and the postulated completion of the abstract group $\PAT$ is established in \cite[Proposition 6.5.1]{Ribes_Graphs}. In \cite[Corollary 1.3]{Ribes71}, it is proven that a profinite amalgamated product of two profinite groups along a finite amalgamation is injective; an analogous proof shows that any graph of profinite groups over a finite graph and with finite edge groups is an injective graph of profinite groups. The injectivity of the completion map $\iota$ is equivalent to injectivity of the graph of groups: for amalgams, this is \cite[Theorem 9.2.4]{RZ}, and the general case is established analogously.
\end{proof}
Note that the profinite topology determined by $\mathcal{U}$ will agree with the full profinite topology on $\Gamma$ if the vertex groups are \emph{strongly complete} profinite groups, meaning that they are isomorphic to their own profinite completions. This is an extremely general condition: it is a deep theorem of Nikolov and Segal (cf. \cite[Theorem 1.1]{Nikolov_Segal}) that every (topologically) finitely generated profinite group is strongly complete. In that case, Proposition~\ref{Prop::completion of fundamental group} yields an isomorphism $\PP = \widehat{\PA}$ of profinite groups. Given an abstract graph of groups $\ggx$ over a finite graph $\Xi$ with fundamental group $\Gamma = \PA$, we define its \emph{completion} $(\overline{\Gs},X)$ to be the graph of profinite groups given by the completion of the groups in $\ggx$ with respect to the profinite topology induced by $\Gamma$. In that case, we may use Proposition~\ref{Prop::profinite graph of profinite groups} to obtain the following.
\begin{proposition}[Proposition 6.5.3 in \cite{Ribes_Graphs}]\label{Prop::completion of fundamental group}
	Let $\ggx$ be a graph of groups over a finite graph $\Xi$ whose fundamental group $\PA$ is residually finite. The completion $(\overline{\Gs},\Xi)$ is an injective graph of profinite groups and there is an isomorphism of profinite groups
	\begin{equation}
		\PP[(\overline{\Gs},X)] \cong \widehat{\PA}
	\end{equation}
	induced by the inclusions.
\end{proposition}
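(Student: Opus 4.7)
Write $\Gamma = \PA$. The plan is to identify $(\overline{\Gs}, \Xi)$ with the graph of closures of the $\Gs(x)$ inside $\Gh$, and then to exhibit continuous morphisms between $\PP[(\overline{\Gs}, \Xi)]$ and $\Gh$ that are mutually inverse. I would begin by verifying injectivity of the graph of profinite groups. Residual finiteness of $\Gamma$ ensures that each vertex or edge group $\Gs(x)$, as a subgroup of $\Gamma$, embeds into $\Gh$ via the canonical map; the completion of $\Gs(x)$ with respect to the topology induced by the profinite topology on $\Gamma$ then coincides canonically with the closure $\overline{\Gs(x)}$ inside $\Gh$. For each edge $e$, the abstract inclusion $\partial_i \colon \Gs(e) \hookrightarrow \Gs(d_i(e))$ is the restriction of the inclusion $\Gs(e) \hookrightarrow \Gamma$, so its continuous extension $\overline{\partial_i} \colon \overline{\Gs(e)} \to \overline{\Gs(d_i(e))}$ is simply the inclusion of closures in $\Gh$. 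This is automatically injective and establishes that $(\overline{\Gs}, \Xi)$ is an injective graph of profinite groups.

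Next I would build the isomorphism by two invocations of universal properties. Fixing a spanning tree $\Theta \subseteq \Xi$, the inclusions $\overline{\Gs(v)} \hookrightarrow \Gh$ together with the images $\bar{t}_e \in \Gh$ of the stable letters $t_e \in \Gamma$ for edges $e \in E(\Xi) \setminus E(\Theta)$ satisfy the defining relations of $\PT[(\overline{\Gs}, \Xi, \Theta)]$: the edge relations hold on the dense subsets $\Gs(e) \subseteq \overline{\Gs(e)}$ by the construction of $\Gamma = \PAT$ and extend by continuity of multiplication in $\Gh$. The universal property of the profinite fundamental group thus yields a continuous morphism
\begin{equation*}
\varphi \colon \PT[(\overline{\Gs}, \Xi, \Theta)] \to \Gh.
\end{equation*}
In the reverse direction, the abstract inclusions $\Gs(v) \hookrightarrow \overline{\Gs(v)} \hookrightarrow \PT[(\overline{\Gs}, \Xi, \Theta)]$ together with the stable letters of the profinite fundamental group satisfy the defining relations of $\Gamma = \PAT$, so they assemble into a homomorphism $\psi_0 \colon \Gamma \to \PT[(\overline{\Gs}, \Xi, \Theta)]$. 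Since the codomain is profinite and hence has a neighbourhood basis at the identity consisting of finite-index open normal subgroups, $\psi_0$ is continuous for the profinite topology on $\Gamma$ and extends uniquely to a continuous morphism $\psi \colon \Gh \to \PT[(\overline{\Gs}, \Xi, \Theta)]$.

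To conclude, I would verify that $\varphi$ and $\psi$ are mutually inverse. By construction, $\psi \circ \varphi$ acts as the identity on each $\overline{\Gs(v)}$ and on each stable letter, and these together topologically generate $\PT[(\overline{\Gs}, \Xi, \Theta)]$, so $\psi \circ \varphi = \operatorname{id}$. Similarly, $\varphi \circ \psi$ restricts to the identity on the dense subgroup $\Gamma \subseteq \Gh$, and extends to $\operatorname{id}_{\Gh}$ by continuity.

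The main obstacle I anticipate is the bookkeeping around the two universal properties, in particular verifying that the edge relations lift to the closures: one needs $\overline{\partial_1}(g) \cdot \bar{t}_e^{-1} \cdot \overline{\partial_0}(g) \cdot \bar{t}_e = 1$ in $\Gh$ for all $g \in \overline{\Gs(e)}$, which I would obtain from density of $\Gs(e)$ in $\overline{\Gs(e)}$ and continuity of multiplication in $\Gh$. A related subtlety is that the image of $\Gamma$ in $\PT[(\overline{\Gs}, \Xi, \Theta)]$ must be dense for the last continuity argument to close the loop; this follows because $\PT[(\overline{\Gs}, \Xi, \Theta)]$ is topologically generated by the closures $\overline{\Gs(v)}$ together with the stable letters, and each $\Gs(v)$ is dense in $\overline{\Gs(v)}$ by definition.
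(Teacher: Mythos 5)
The paper offers no proof of this statement: it is quoted verbatim as Proposition 6.5.3 of \cite{Ribes_Graphs}, so there is no internal argument to compare yours against. Judged on its own merits, your argument is essentially the standard (and correct) proof of that result: identify each completion $\overline{\Gs(x)}$ with the closure of $\Gs(x)$ in $\Gh$, produce $\varphi$ from the universal property of the profinite fundamental group by checking the edge relations on dense subsets, produce $\psi$ from the universal property of the abstract fundamental group followed by the universal property of the profinite completion, and verify the two composites on topological generators.

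One point needs repair. You justify the assertion that $(\overline{\Gs},\Xi)$ is an \emph{injective} graph of profinite groups by observing that the extended edge morphisms $\overline{\partial_i}\colon \overline{\Gs(e)}\to\overline{\Gs(d_i(e))}$ are injective. But injectivity of the edge morphisms is part of the definition of a graph of profinite groups itself; in this paper ``injective'' means that the canonical maps $\overline{\Gs(x)}\to \PP[(\overline{\Gs},\Xi)]$ into the profinite fundamental group are monomorphisms, and this is precisely the nontrivial content of the proposition (it can fail for general graphs of profinite groups). Your argument does yield this, but only after $\varphi$ has been constructed: by construction $\varphi$ precomposed with the canonical map $\overline{\Gs(x)}\to \PP[(\overline{\Gs},\Xi)]$ is the inclusion $\overline{\Gs(x)}\hookrightarrow\Gh$, which is injective, so the canonical map is injective. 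You should state this explicitly and move the injectivity claim to after the construction of $\varphi$ rather than presenting it as an immediate consequence of the edge maps being inclusions.
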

Note that, in general, the profinite topology of a residually finite group $\Gamma$ may induce a strictly coarser topology on a subgroup $\Delta$ than the profinite topology of $\Delta$ itself. Thus, care needs to be taken to distinguish between the profinite groups $\Db$, the closure of $\Delta$ within $\Gh$, and $\widehat{\Delta}$, the profinite completion of $\Delta$, which are not isomorphic in such an event. However, in specific cases, we know that the profinite topology of $\Gamma$ must indeed induce on $\Delta$ its full profinite topology, which means that there is an isomorphism $h \colon \widehat{\Delta} \xrightarrow{\sim} \Db \subseteq \Gh$ making the diagram
\begin{equation}
\begin{tikzcd}
\widehat{\Delta} \arrow[rr, "h"] && \Db \\
&\Delta \arrow[ul, "\iota_{\Delta}"] \arrow[ur, "\iota_{\Gamma}\at{\Delta}"']&
\end{tikzcd}
\end{equation} commute. For instance, it is well-known that this occurs if $\Delta$ has finite index in $\Gamma$; see e.g., \cite[Lemma 3.1.4]{RZ}. Another case is when $\Gamma$ is the fundamental group of a graph of residually finite groups over a finite graph with finite edge groups and $\Delta$ is a vertex group. In that case, we obtain the following corollary to Proposition~\ref{Prop::completion of fundamental group}.

\begin{corollary}[Proposition 6.5.6 in \cite{Ribes_Graphs}]\label{Cor::profinite graph of finite groups}
    Let $\ggx$ be a graph of finite groups over a finite graph $\Xi$. Then $\ggx$ forms an injective graph of profinite groups and
	\begin{equation}
	\PP \cong \widehat{\PA}
	\end{equation}
	is an isomorphism of profinite groups.
\end{corollary}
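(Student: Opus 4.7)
The plan is to reduce this corollary directly to Corollary \ref{Cor::profinite graph of profinite groups}. The key observation is that a finite group, equipped with the discrete topology, is a strongly complete profinite group: every subgroup has finite index and is automatically open since the topology is discrete. Thus in the graph of groups $\ggx$ with all groups finite, each vertex group is automatically strongly complete (as a profinite group), and each edge group is finite by hypothesis. So the two structural conditions required by Corollary \ref{Cor::profinite graph of profinite groups} are met tautologically.

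Having verified the hypotheses, I would simply invoke Corollary \ref{Cor::profinite graph of profinite groups} to conclude both that $\ggx$ forms an injective graph of profinite groups and that the isomorphism $\PP \cong \widehat{\PA}$ holds. No extra argument is needed: the present corollary is essentially the special case of the previous one where the vertex groups also happen to be finite.

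I do not anticipate any real obstacle here; the only conceptual point to make explicit is that ``finite group'' qualifies as ``strongly complete profinite group'' so that the earlier corollary genuinely applies. The proof will therefore be a single short paragraph.
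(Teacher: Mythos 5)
Your reduction is correct and matches the paper's intent exactly: the paper states this as an immediate consequence of Corollary \ref{Cor::profinite graph of profinite groups} with no separate proof, since finite groups are strongly complete profinite groups and the edge groups are finite by hypothesis. Nothing further is needed.
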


Corollary \ref{Cor::profinite graph of finite groups} is particularly useful for our purposes, in light of the following celebrated result of classical Bass--Serre theory, which was proven by Karrass, Pietrowski and Solitar in \cite{karrass_pietrowski_solitar}.

\begin{theorem}[Theorem 1 in \cite{karrass_pietrowski_solitar}]\label{Thm::KPS}
	A finitely generated group $\Gamma$ is virtually free if and only if there is a graph of groups $\ggx$ over a finite graph $\Xi$ such that $\Gamma = \PA$ and $\Gs(x)$ is finite for each $x \in \Xi$.
\end{theorem}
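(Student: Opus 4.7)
The plan is to prove the two implications of the theorem separately, using Bass–Serre theory together with Stallings' theorem on ends of groups and Dunwoody's accessibility.

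For the easier \emph{``if''} direction, I would assume $\Gamma = \PA$ for a finite graph of finite groups $\ggx$ and consider the action of $\Gamma$ on the associated Bass–Serre tree $T$, whose vertex and edge stabilisers are conjugates of the finite groups $\Gs(x)$. Since $\Xi$ is finite, only finitely many conjugacy classes of finite subgroups occur as stabilisers. All vertex groups are finite and hence residually finite, so by Serre's criterion (invoked in the proof of Corollary \ref{Cor::profinite graph of profinite groups}) the group $\Gamma$ is residually finite; this allows one to select a finite-index normal subgroup $F \trianglelefteq \Gamma$ that meets each of the finitely many conjugacy classes of non-trivial finite stabilisers trivially. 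Then $F$ acts freely on $T$, so $F$ is free by the Stallings–Serre characterisation of groups acting freely on trees, and $\Gamma$ is virtually free.

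For the harder \emph{``only if''} direction, I would argue by iterated splitting. If $\Gamma$ is finite, the trivial graph with one vertex suffices. Otherwise, $\Gamma$ contains a non-trivial free subgroup of finite index and therefore has infinitely many ends. By Stallings' theorem on ends, $\Gamma$ splits non-trivially over a finite subgroup as an amalgamated product $A \am[C] B$ or an HNN extension $A \am[C]$, with $C$ finite. Since $C$ is finite and $\Gamma$ is finitely generated, a standard Bass–Serre argument shows that both vertex groups are finitely generated, and as subgroups of a virtually free group they are themselves virtually free. I would then iterate this procedure on any vertex group that still has more than one end. Since virtually free groups are finitely presented, Dunwoody's accessibility theorem ensures the process terminates in finitely many steps, producing a graph of groups decomposition of $\Gamma$ over finite edge groups whose vertex groups have at most one end. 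A one-ended virtually free group must be finite, so the resulting vertex groups are all finite, yielding the required decomposition.

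The main obstacle is the careful management of the iterated splitting in the \emph{``only if''} direction: at each stage one must verify that the new vertex groups remain finitely generated (so Stallings' theorem can be reapplied), and one must invoke Dunwoody's accessibility to guarantee termination. Both points rest on the finite presentability of $\Gamma$, itself a consequence of virtual freeness. The backward direction is comparatively routine, the only subtlety being the extraction of a normal finite-index subgroup acting freely on the Bass–Serre tree, which rests on the residual finiteness of $\Gamma$.
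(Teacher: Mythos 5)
The paper does not prove this statement at all: it is imported verbatim as Theorem 1 of Karrass--Pietrowski--Solitar, with a citation and no argument. So there is no internal proof to compare against; what you have written is a proof of a quoted background result. As a standalone sketch it is essentially correct and follows the standard modern route (Stallings' ends theorem plus accessibility), which differs from the original KPS argument via Stallings' structure theorem and bipolar structures and in particular leans on Dunwoody's accessibility theorem, a tool that postdates the result but is logically adequate here. Two small inaccuracies are worth flagging. First, an infinite finitely generated virtually free group need not have infinitely many ends: if its finite-index free subgroup is infinite cyclic, it has exactly two ends. What you actually need, and what is true, is that it has more than one end, so Stallings' theorem still applies; but as written the claim is false for virtually-$\Z$ groups. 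Second, the terminal step is phrased as ``a one-ended virtually free group must be finite,'' which is vacuous rather than what you mean: a finitely generated virtually free group never has exactly one end, so a vertex group with at most one end has zero ends and is therefore finite. The remaining ingredients --- finite generation of the vertex groups of a splitting over a finite subgroup, closure of virtual freeness under passage to subgroups, finite presentability of finitely generated virtually free groups, and termination via accessibility --- are all correctly identified and standard. In the ``if'' direction, your extraction of a normal finite-index subgroup meeting all (finitely many, finite) vertex groups trivially and hence acting freely on the Bass--Serre tree is the usual argument and is sound.
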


Another situation in which the induced topology on a finitely subgroup $\Delta \leq \Gamma$ agrees with the full profinite topology is when the ambient group $\Gamma$ is \emph{subgroup separable}, meaning that finitely generated subgroups are closed in the profinite topology on $\Gamma$. For virtually free groups, this property follows essentially from a result of M. Hall (cf. \cite[Proposition I.3.10]{lyndon_schupp}) and we refer the reader to \cite[Theorem 11.2.2 and Corollary 11.2.6]{Ribes_Graphs} for a geometric proof.
\begin{lemma}[Theorem 11.2.2 in \cite{Ribes_Graphs}]\label{Lem::Induced Topology}
Virtualy free groups are subgroup separable and induce the full profinite topology on their finitely generated subgroups.
\end{lemma}
One would hope for a profinite analogue of the Bass--Serre structure theorem, which postulates a correspondence between the actions of a group $\Gamma$ on trees and graphs of groups with fundamental group $\Gamma$. One direction associates to a group action on a tree $\Gamma \curvearrowright \Theta$ a \emph{quotient graph of groups} $\ggx$, which is constructed by choosing a spanning tree in $\Sigma \subseteq \Xi$ and a lift $\widetilde{\Sigma} \subseteq \Theta$, and then defining $\ggx$ as the stabilisers of $\widetilde{\Sigma}$ under the action of $\Gamma$. Unfortunately, in the profinite setting, this construction does not always work: a profinite graph might not have a spanning subtree (cf. \cite[Example 3.4.1]{Ribes_Graphs}), and even if it does, that tree might not have lift to the covering tree (cf. \cite[Example 3.4.2]{Ribes_Graphs}). However, in the case where the quotient $X = G \bs T$ is finite, it forms a bona fide graph of groups, so it is possible to construct a quotient graph of groups, and one obtains a profinite analogue of the Bass--Serre structure theorem. We refer the curious reader to Section 6 in \cite{Ribes_Graphs} for details and to \cite[Theorem 6.6.1]{Ribes_Graphs} for the statement of the structure theorem, which we omit here in the interest of brevity. Recently, a general structure theorem has also been established in the pro-$p$ category: see \cite[Theorem 1.1]{Pro_P_Bass_Serre}.

The other direction associates to a graph of profinite groups $(\Gs,X)$ a profinite tree $T$ referred to as the \emph{profinite structure tree} whereon the fundamental group $G = \PP[(\Gs,X)]$ acts naturally and whose quotient by this action retrieves the initial graph. Although this construction can be carried out for any graph of profinite groups (cf. \cite[Section 6.3]{Ribes_Graphs}), we shall only presently describe it in the case where $X$ is a finite graph, as we will not require the complexity of greater generality. In that case, choose a spanning subtree $\Theta \subseteq X$ and write $G(x) \leq G$ for the image of $\Gs(x)$ in $G$ under the inclusion specified by $\Theta$. Define the profinite tree $T$ as
\[
V(T) = \bigsqcup_{v \in V(X)} G / G(v) \quad \text{and} \quad E(T) = \bigsqcup_{e \in E(X)} G / G(e)
\]
where $G/G(x)$ denotes the space of left cosets of $G(x) \leq G$ endowed with the (profinite) quotient topology, and
\[
d_0(gG(e)) = gG(d_0(e)) \quad \text{and} \quad d_1(gG(e)) = gt_eG(d_1(e))
\] whenever $e \in E(X)$ is an edge and $t_e$ the associated stable letter. One verifies that $T$ forms a profinite graph, and in fact a profinite tree: see \cite[Corollary 6.3.6]{Ribes_Graphs}. Moreover, as each $G/G(e)$ is closed in $T$ and $X$ is finite, we obtain:
\begin{remark}
    \label{Rem::Closed Edge Set} The structure tree of a graph of profinite groups over a finite graph has closed edge set.
\end{remark}

The profinite structure tree construction is especially useful in that it allows one to study profinite groups which decompose as graphs of profinite groups via their actions on the structure tree. This was first exploited by Zalesskii and Melnikov in \cite{Zalesskii_Melnikov}, where certain structural properties of closed subgroups of such profinite groups were deduced from their action on the profinite structure tree. For instance, they prove the following result, which we shall utilise throughout the present work.

\begin{proposition}[Theorem 2.10 in \cite{Zalesskii_Melnikov}]\label{Prop::Finite Group Profinite Tree}
    A finite group acting on a profinite tree must fix a vertex. 
\end{proposition}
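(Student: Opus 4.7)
The plan is to reduce the statement to the classical abstract Bass--Serre fact that a finite group acting on a finite tree without edge inversion must fix a vertex, via an inverse limit approximation of the profinite tree $T$.

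To carry this out, I would first realise $T = \varprojlim_i T_i$ as an inverse limit of finite trees using the structural theory developed in \cite{Ribes_Graphs}, and then $F$-equivariantise the presentation. Since $F$ is finite and acts continuously, for any open equivalence relation $\sim$ on $T$ compatible with the graph structure, the intersection $\bigcap_{g \in F} g(\sim)$ is again an open equivalence, and it is $F$-invariant. Cofinality of such relations within the system of finite-tree quotients then permits an $F$-equivariant expression $T = \varprojlim_i T_i$ in which each $T_i$ is a finite $F$-tree with $F$-equivariant transition maps.

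In our setting, the action commutes separately with $d_0$ and $d_1$, so edge inversions cannot occur. Consequently, by the classical Bass--Serre theorem (cf.\ \cite[Theorem I.4.7]{Serre}), the fixed vertex set $V(T_i)^F$ is non-empty for each $i$. The transition maps of the inverse system restrict to maps between these finite non-empty fixed-vertex sets, and therefore
\[
V(T)^F = \varprojlim_i V(T_i)^F
\]
is non-empty by the standard compactness argument for inverse limits of non-empty finite sets, providing the desired $F$-fixed vertex in $T$.

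The main obstacle I expect is the technical step of presenting $T$ as an inverse limit of finite $F$-\emph{trees} rather than merely finite connected $F$-graphs, since naive quotients of a profinite tree by open equivalence relations may generate cycles. Circumventing this requires careful appeal to the cohomological exactness property defining profinite trees, together with the $F$-saturation trick to preserve equivariance in passing to a cofinal refinement.
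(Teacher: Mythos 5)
This proposition is not proved in the paper at all: it is quoted verbatim from Zalesskii--Melnikov (Theorem 2.10 there), where the argument works directly with the homological definition of a profinite tree (the exact sequence of free profinite modules), typically reducing to a cyclic group of prime order. So your attempt should be judged as a stand-alone proof, and it has a genuine gap at exactly the point you flag as ``the main obstacle'': a profinite tree is, in general, \emph{not} an inverse limit of finite trees, and no saturation or cofinal refinement can fix this. The standard counterexample is the Cayley graph of $\pZ$ (or of $\Z_p$) with respect to a single generator: it is a profinite tree, yet every finite connected quotient graph is a circuit. Hence there is simply no cofinal system of finite tree quotients to pass to, and the reduction to the classical Bass--Serre fixed-point theorem for finite trees collapses. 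The compactness step at the end (an inverse limit of non-empty finite sets is non-empty) is fine, but it is fed by the claim $V(T_i)^F \neq \emptyset$, which you can only justify when $T_i$ is a tree; for a finite connected graph with cycles a finite group action need not fix a vertex (rotate a circuit), so nothing survives of the argument once the quotients fail to be trees.

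If you want a proof rather than a citation, you have to exploit the defining exact sequence $0 \to \pZ[[(E^*(T),\ast)]] \to \pZ[[V(T)]] \to \pZ \to 0$ directly as a sequence of modules over the finite group, rather than trying to linearise the problem only after approximating $T$ by finite graphs. That is the route taken in Zalesskii--Melnikov and in Ribes' book, and it is genuinely different from the approximation strategy you propose.
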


As a consequence, one finds a variety of structural results regarding subgroups of such profinite groups: see \cite[Section 3]{Zalesskii_Melnikov}. We refer the curious reader also to \cite{Bridson_Conder_Reid} for a recent application of this result in the proof of an elegant theorem on the profinite rigidity of Fuchsian groups.

\subsection{Homological Algebra of Profinite Groups}\label{Sec::Prelim:Coh}
Let $G$ be a profinite group and $R$ a profinite ring. A left/right \emph{$G$-module} is an abelian topological group $M$ equipped with a left/right continuous $G$-action. The completed group algebra of $G$ over $R$ is the profinite ring
\[
R[[G]] = \lim_{\longleftarrow U} \left(R \left[G/U \right] \right)
\]
where $U \trianglelefteq_f G$ runs over all open normal subgroups. The abstract group algebra $R[G]$ may be endowed with the natural topology generated by the cosets of the ideals which arise as kernels of the quotient epimorphisms 
\[
R[G] \longrightarrow (R/I)[G/U]
\] for $I \trianglelefteq_o R$ an open ideal and $U \trianglelefteq_o G$ an open subgroup. By \cite[Lemma 5.3.5]{RZ}, $R[G]$ embeds in $R[[G]]$ and the latter is its completion with respect to the above topology. Given a $G$-module and a continuous action $R \curvearrowright M$, one obtains a canonical $R[[G]]$-module structure on $M$, where the action of $G$ extends $R$-linearly to the image of $R[G]$ as a dense subset of $R[[G]]$, and then extends continuously to all of $R[[G]]$. We define the cohomology groups of a $G$-module $M$ via
\[
H^\bullet(G,M) = \lim_{U \longrightarrow} H^\bullet(G/U,M^U)
\]
where $U \trianglelefteq_o G$ runs over all open normal subgroups of $G$. Note that given an abstract group $\Gamma$ with profinite completion map $\iota \colon \Gamma \to \Gh$, any $\Gh$-module $M$ forms a $\Gamma$-module with action via $\iota$. Conversely, if $M$ is a \emph{finite} $\Gamma$-module, the action $\Gamma \to \operatorname{Aut}(M)$ factors through a map $\Gh \to \operatorname{Aut}(M)$, making $M$ a discrete $\Gh$-module. We say that $\Gamma$ is \emph{cohomologically good} if the profinite completion map $\iota \colon \Gamma \to \Gh$ induces an isomorphism of cohomology groups
\[
\iota^* \colon H^\bullet(\Gh,M) \longrightarrow H^\bullet(\Gamma,M)
\]
for any finite $\Gamma$-module $M$. The following result yields an equivalent characterisation of cohomological goodness for $\mathrm{FP}_\infty$-groups. It was proven first by Wilkes in \cite[Proposition 3.14]{Wilkes_SFS} and subsequently, in the below formulation, by Jaikin-Zapirain in \cite[Proposition 3.1]{jaikin_fiberedness}.

\begin{lemma}[Proposition 3.1 in \cite{jaikin_fiberedness}]\label{Lem::Resolution of pZ}
	Let $\Gamma$ be a group of type $\mathrm{FP}_\infty$ and
	\begin{equation}
		\cdots \rightarrow \Z[\Gamma]^{n_i} \xrightarrow{\phi_i} \Z[\Gamma]^{n_i-1} \rightarrow \ldots \rightarrow \Z[\Gamma]^{n_1} \xrightarrow{\phi_1} \Z[\Gamma] \rightarrow \Z \rightarrow 0 
	\end{equation}
	be a resolution of the trivial $\Gamma$-module $\Z$ by finitely generated free $\Z[\Gamma]$-modules. Then $\Gamma$ is cohomologically good iff the induced sequence
	\begin{equation}
		\cdots \rightarrow \pZ[[\Gh]]^{n_i} \xrightarrow{\widehat{\phi_i}} \pZ[[\Gh]]^{n_i-1} \rightarrow \ldots \rightarrow \pZ[[\Gh]]^{n_1} \xrightarrow{\widehat{\phi_1}} \pZ[[\Gh]] \rightarrow \pZ \rightarrow 0 
	\end{equation}
	is exact.
\end{lemma}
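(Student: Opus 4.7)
The plan is to reduce both directions to an identification of cochain complexes. For any finite discrete $\Gh$-module $M$ (made into a $\Gamma$-module via the completion map $\iota$), there are natural isomorphisms
\[
\Hom_{\Z[\Gamma]}(\Z[\Gamma]^{n}, M) \,\cong\, M^{n} \,\cong\, \Hom_{\pZ[[\Gh]]}(\pZ[[\Gh]]^{n}, M),
\]
where the rightmost $\Hom$ is in continuous $\pZ[[\Gh]]$-linear maps; these identifications intertwine the differentials of $\Hom_{\Z[\Gamma]}(F_\bullet, M)$ and $\Hom_{\pZ[[\Gh]]}(\widehat{F}_\bullet, M)$, both of which are right multiplication by the same matrices over $\Z[\Gamma] \hookrightarrow \pZ[[\Gh]]$. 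I would first verify carefully that the comparison induced on cohomology coincides with $\iota^*$. In particular, the cohomology of the profinite cochain complex $\Hom_{\pZ[[\Gh]]}(\widehat{F}_\bullet, M)$ always computes $H^\bullet(\Gamma, M)$, whether or not $\widehat{F}_\bullet$ is itself exact.

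For ($\Leftarrow$), if the completed sequence is exact then $\widehat{F}_\bullet \to \pZ$ is a free (hence projective) resolution of $\pZ$ in the category of profinite $\pZ[[\Gh]]$-modules. Its $\Hom_{\pZ[[\Gh]]}(-, M)$-dual therefore computes $\operatorname{Ext}^{\bullet}_{\pZ[[\Gh]]}(\pZ, M) = H^\bullet(\Gh, M)$, and combined with the identification above this shows $\iota^*$ to be an isomorphism, hence $\Gamma$ is cohomologically good.

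For the converse ($\Rightarrow$), I would induct on $k \geq 0$ to show exactness of $\widehat{F}_\bullet$ at position $k$. Given exactness in positions $< k$, we obtain an exact truncation $0 \to K_k \to \widehat{F}_k \to \cdots \to \widehat{F}_0 \to \pZ \to 0$ with $K_k = \Ker(\widehat{\phi}_k)$; setting $B_k = \Img(\widehat{\phi}_{k+1}) \subseteq K_k$ and $H_k = K_k / B_k$, I want $H_k = 0$. Extending $K_k$ to a free profinite resolution splices into a genuine resolution of $\pZ$, whose dual computes $H^\bullet(\Gh, M)$; comparing this to the naive cochain complex of $\widehat{F}_\bullet$ via the short exact sequence $0 \to H_k \to \widehat{F}_k/B_k \to K_{k-1} \to 0$ and its long $\operatorname{Ext}$-sequence, goodness should force the equalities needed to conclude $\Hom_{\pZ[[\Gh]]}(H_k, M) = 0$ for every finite discrete $\pZ[[\Gh]]$-module $M$. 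Since $H_k$ is itself profinite, if $H_k \neq 0$ there would exist an open $\pZ[[\Gh]]$-submodule $N \leq H_k$ with $H_k/N$ a nonzero finite continuous quotient, contradicting the vanishing via the quotient map.

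The main obstacle I anticipate is the inductive step of the converse: extracting $\Hom_{\pZ[[\Gh]]}(H_k, M) = 0$ from goodness in a clean way. The most transparent route invokes the hyper-$\operatorname{Ext}$ spectral sequence
\[
E_2^{p,q} = \operatorname{Ext}^{p}_{\pZ[[\Gh]]}(H_q(\widehat{F}_\bullet), M) \Rightarrow H^{p+q}\bigl(\Hom_{\pZ[[\Gh]]}(\widehat{F}_\bullet, M)\bigr),
\]
whose abutment coincides with $H^{p+q}(\Gh, M)$ under goodness; analysis of the lowest surviving column on the $q$-axis then forces $H_q(\widehat{F}_\bullet) = 0$ for all $q \geq 1$. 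An equivalent route compares $\widehat{F}_\bullet$ with a true free resolution $Q_\bullet \to \pZ$ via a chain map $\beta$ lifting the identity on $\pZ$, and argues that the mapping cone of $\beta$ is acyclic by the same hyper-$\operatorname{Ext}$ argument; this shows $\beta$ is a quasi-isomorphism and hence $\widehat{F}_\bullet$ resolves $\pZ$.
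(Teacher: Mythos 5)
This lemma is quoted in the paper as an external result (attributed to Wilkes and, in this formulation, to Jaikin-Zapirain), and the paper gives no proof of it; so there is no in-paper argument to compare against. Your outline is the standard one from the cited sources and is essentially sound. The identification of the two cochain complexes $\Hom_{\Z[\Gamma]}(F_\bullet,M)\cong M^{n_\bullet}\cong \Hom_{\pZ[[\Gh]]}(\widehat{F}_\bullet,M)$ is correct (both differentials are given by the same matrices over $\Z[\Gamma]$, acting on $M$ through $\iota$), and the ($\Leftarrow$) direction then follows once one checks, as you flag, that the comparison map agrees with $\iota^*$. Note also that exactness at $\pZ$ and at $\pZ[[\Gh]]$ is automatic from right-exactness of completion and density of $\Gamma$ in $\Gh$, which gives you the base case of your induction for free.

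The one substantive caveat concerns the first version of your inductive step. Comparing the spliced genuine resolution with $\widehat{F}_\bullet$ in degree $k$ alone, surjectivity of $\iota^*$ in degree $k$ yields only that the restriction map $\Hom(\widehat{F}_k/B_k,M)\to\Hom(H_k,M)$ vanishes; by the long exact sequence attached to $0\to H_k\to \widehat{F}_k/B_k\to K_{k-1}\to 0$ this gives an injection $\Hom(H_k,M)\hookrightarrow \operatorname{Ext}^1_{\pZ[[\Gh]]}(K_{k-1},M)\cong H^{k+1}(\Gh,M)$, not the vanishing you need. Closing the gap requires either the degree-$(k+1)$ comparison as well (using injectivity of $\iota^*$ there) or, as you correctly anticipate, the hyper-$\operatorname{Ext}$ spectral sequence $E_2^{p,q}=\operatorname{Ext}^p_{\pZ[[\Gh]]}(H_q(\widehat{F}_\bullet),M)$ together with a minimal-$q_0$ analysis; the latter does work and, combined with the fact that a nonzero profinite module admits a nonzero finite continuous quotient, completes the proof. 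So your proposal is correct in strategy, with the acknowledged step being a genuine (but fillable) gap rather than a flaw.
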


We conclude this subsection with a result of Garrido and Jaikin-Zapirain on the homological algebra of the completed group algebra of the profinite completion $\Gh$ of a virtually free group $\Gamma$, as an abstract $\Gamma$-module. We shall use it as a portal from the profinite to the abstract world, which allows us to pass derivations of $\Gh$ in its completed group algebra to derivations of $\Gamma$ in a meaningful $\Gamma$-module.

\begin{theorem}[Corollary 5.6 in \cite{Garrido_Jaikin}]\label{Thm::Homological Algebra}
	Let $\Gamma$ be a finitely generated virtually free group. For any prime $p$, the left $\Fp[\Gamma]$-module $\Fp[[\Gh]]$ is isomorphic to a direct union of free left $\Fp[\Gamma]$-modules.
\end{theorem}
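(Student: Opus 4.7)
The plan is to combine the Karrass--Pietrowski--Solitar description of $\Gamma$ as the fundamental group of a finite graph of finite groups (Theorem \ref{Thm::KPS}) with a filtration argument on $\Fp[[\Gh]]$ coming from its defining inverse system, first reducing to the case of a finitely generated free group. By Theorem \ref{Thm::KPS}, write $\Gamma = \PA$ for a graph of finite groups $\ggx$, and by Corollary \ref{Cor::profinite graph of finite groups} identify $\Gh$ with $\PP$. A standard selection of a finite-index torsion-free subgroup provides a finitely generated free normal subgroup $F \trianglelefteq \Gamma$ of finite index, whose closure in $\Gh$ coincides with $\widehat{F}$ by Lemma \ref{Lem::Induced Topology}. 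Since $[\Gamma : F] < \infty$, the algebra $\Fp[\Gamma]$ is a finitely generated free $\Fp[F]$-module and $\Fp[[\Gh]]$ is a finitely generated free $\Fp[[\widehat{F}]]$-module; consequently a directed family of free $\Fp[F]$-submodules exhausting $\Fp[[\widehat{F}]]$ can be promoted, after averaging over $\Gamma/F$-coset representatives, to a directed family of free $\Fp[\Gamma]$-submodules exhausting $\Fp[[\Gh]]$.

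For the free group case, the plan is to construct an exhausting directed family of free $\Fp[F]$-submodules of $\Fp[[\widehat{F}]]$. Writing $\Fp[[\widehat{F}]] = \varprojlim_N \Fp[F/N]$ over finite-index normal subgroups $N \trianglelefteq_f F$, one would choose a cofinal descending tower $F \geq N_1 \geq N_2 \geq \cdots$ with $\bigcap N_i = 1$, and for each $i$ produce an $\Fp[F]$-linear section of the projection $\Fp[[\widehat{F}]] \twoheadrightarrow \Fp[F/N_i]$ whose image is a free $\Fp[F]$-submodule $M_i$, chosen so that $M_i \subseteq M_{i+1}$. Here one exploits that $\Fp[F]$ is a fir of global dimension one (so every finitely generated projective $\Fp[F]$-module is free) together with the fact that $\Fp[F/N_i]$ has a finite resolution by finitely generated free $\Fp[F]$-modules, obtained from Fox calculus or equivalently from the action of $F$ on its Cayley tree (which gives a two-term resolution of $\Fp$ by free $\Fp[F]$-modules). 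Pushing these resolutions through the inverse system and splicing them yields the required $M_i$; their union is $\Fp[[\widehat{F}]]$ because any element of $\Fp[[\widehat{F}]]$ is determined by its images in the tower and so lies in sufficiently large $M_i$.

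The main obstacle is the simultaneous control of the inverse-limit topology on $\Fp[[\widehat{F}]]$ and its abstract $\Fp[F]$-module structure: an arbitrary abstract splitting of a topological projection need not assemble into a chain of submodules, and even if each $M_i$ is free over $\Fp[F]$ of finite rank, guaranteeing that the transition maps $M_i \hookrightarrow M_{i+1}$ are $\Fp[F]$-linear inclusions compatible with the tower requires a coherent choice of bases. This is the technical heart of the argument, and is where the homological niceness of $\Fp[F]$ (fir property, global dimension one) becomes indispensable; without it, one would obtain only a direct union of projectives, falling short of the stated conclusion. The final lift from $F$ to $\Gamma$ is comparatively routine because $\Fp[\Gamma]$ is a free $\Fp[F]$-module of finite rank, so direct unions of free modules are preserved under restriction and induction between $F$ and $\Gamma$.
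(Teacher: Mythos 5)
Note first that the paper does not prove this statement: it is imported verbatim as Corollary 5.6 of \cite{Garrido_Jaikin}, so the comparison must be with the argument in that source. Your reduction to a finite-index free normal subgroup $F \trianglelefteq \Gamma$ is sound and is essentially how the virtually free case is handled there: since $\Fp[[\Gh]]$ decomposes as a finite direct sum of translates of $\Fp[[\widehat{F}]]$ indexed by coset representatives, a directed exhaustion of $\Fp[[\widehat{F}]]$ by free $\Fp[F]$-submodules $M_i$ induces the directed exhaustion of $\Fp[[\Gh]]$ by the free $\Fp[\Gamma]$-modules $\Fp[\Gamma]\cdot M_i \cong \Fp[\Gamma]\otimes_{\Fp[F]} M_i$ (this is induction of modules, not ``averaging'', but the mechanism you describe is the right one).

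The free case, however, is where your argument breaks down, and it breaks down irreparably. An $\Fp[F]$-linear section of the projection $\Fp[[\widehat{F}]] \twoheadrightarrow \Fp[F/N_i]$ would have image isomorphic to $\Fp[F/N_i]$, a module that is finite-dimensional over $\Fp$; such a module is never free over the infinite-dimensional algebra $\Fp[F]$, so the submodules $M_i$ you posit cannot be free (indeed, since $\Fp[[\widehat{F}]]$ has no nonzero finite $\Fp[F]$-submodules, such sections do not exist at all). Moreover, even granting their existence, a countable ascending union of finite modules is countable, whereas $\Fp[[\widehat{F}]]$ is uncountable, so the exhaustion claim ``any element lies in sufficiently large $M_i$'' is a non sequitur: an element being determined by its images in the tower does not place it in the image of any section. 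The two ingredients your sketch is missing are precisely the content of the cited result: (i) the flatness of $\Fp[[\widehat{F}]]$ as an \emph{abstract} $\Fp[F]$-module, which is where the exactness of completed free resolutions (cohomological goodness of free groups, cf.\ Lemma \ref{Lem::Resolution of pZ}) actually enters the argument, and (ii) the theorem of Cohn that over a semifir --- which $\Fp[F]$ is, being a fir --- every flat module is the directed union of its finitely generated free submodules. You invoke the fir property only to conclude ``projective implies free'', which is not the use that is needed; without step (i) and the correct form of step (ii), no amount of splicing of Fox-calculus resolutions will produce the required nested free submodules.
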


We note that $R[[\Gh]]$ continues to be isomorphic to a direct limit of projective $R[\Gamma]$-modules for certain other finite rings, such as $\Z/n\Z$ for $n$ a product of distinct primes: cf. \cite[Corollary 5.7]{Garrido_Jaikin}. However, this shall not be necessary for our purposes, as we substitute their argument on supernatural numbers by a Bass--Serre theoretic argument (cf. Lemma~\ref{Lem::Bass--Serre Magic 1}).

\subsection{A juxtaposition}\label{Sec::Prelim:Juxta} Pursuant of a Bass--Serre theoretic description of groups of cohomological dimension one, Dunwoody proved in 1979 a remarkable result juxtaposing the worlds of Bass--Serre theory and group cohomology. Recall that a \emph{left derivation} or \emph{crossed homomorphism} of a group $\Gamma$ in a left $\Gamma$-module $M$ is a map $f \colon \Gamma \to M$ satisfying $f(gh) = f(g) + g f(h)$ for all $g,h \in \Gamma$. Similarly, a \emph{right derivation} in a right $\Gamma$-module $M$ is a map $f \colon \Gamma \to M$ satisfying $f(gh) = f(g)h + f(h)$ for all $g,h \in \Gamma$. Note that right derivations of $\Gamma$ in a right $\Gamma$-module $M$ are equivalently left derivations of the (isomorphic) opposite group $\Gamma^{\mathrm{op}}$ in $M$ endowed with its natural left $\Gamma^{\mathrm{op}}$-module structure. For this reason, it is inconsequential whether one works with left or right derivations; we choose right derivations out of convenience and agreement with the convention of Dicks and Dunwoody. Hence, unless stated otherwise, all derivations appearing in this article shall be assumed \emph{right} derivations unless explicitly mentioned otherwise.

Given a right $\Gamma$-module $M$, one may form the right semi-direct product $\Gamma \ltimes M$, which is a group with underlying set $\Gamma \times M$ and group operation $(g_1,m_1) \cdot (g_2,m_2) = (g_1g_2, m_1g_2 + m_2)$. It comes equipped with natural projections of sets
\[
    \operatorname{proj_1} \colon \Gamma \ltimes M \to \Gamma \quad \text{and} \quad \operatorname{proj_2} \colon \Gamma \ltimes M \to M
\]
where we note that $\operatorname{proj_1}$ forms a homomorphism of groups but $\operatorname{proj_2}$ might not. A \emph{homomorphic section} of $\operatorname{proj_1}$ is then a homomorphism of groups $F \colon \Gamma \to \Gamma \ltimes M$ which satisfies $\operatorname{proj}_1 F = \operatorname{id}_\Gamma$. We obtain the following alternative description of right derivations, which shall be useful in the proofs of Section~\ref{Sec::Access}.
\begin{remark}\label{Rem::Sections}
    Let $\Gamma$ be a group and $M$ a right $\Gamma$-module. There exists a bijective correspondence between right derivations $f \colon \Gamma \to M$ and homomorphic sections $F \colon \Gamma \to \Gamma \ltimes M$ of the projection $\operatorname{proj}_1 \colon \Gamma \ltimes M \to \Gamma$, given by $f \mapsto (\operatorname{id}_\Gamma, f)$ on derivations and $F \mapsto \operatorname{proj}_2 F$ on sections.
\end{remark}
The analogous statement for left derivations is explained in \cite[Section IV.2]{brown_cohomology}; we invite the reader to verify that the present form applies to right derivations. Naturally, both the equivalence of left and right derivations, as well as their equivalence to homomorphic sections of the semi-direct product $\Gamma \ltimes M$, continue to hold in the profinite category (see \cite[Section 6.8]{RZ}, where derivations and homomorphisms are continuous and $\Gamma \ltimes M$ carries the product topology. We shall make ample use of this fact in Section~\ref{Sec::Access}.

In the cohomology of abstract and profinite groups, derivations and continuous derivations appear as 1-coboundaries. The first cohomology group of an abstract or profinite group is thus given by the set of its (continuous) derivations modulo its (continuous) \emph{inner} derivations, that is, functions $\Gamma \to M$ of the form $g \mapsto m(g - 1)$ for some $m \in M$. We refer the reader to \cite[Section IV.2]{brown_cohomology} and \cite[Section 6.8]{RZ} for details on the abstract and profinite versions of the theory, respectively. It was precisely this characterisation which Dunwoody used to interlink Bass--Serre theory and group cohomology.

\begin{theorem}[Theorem 5.2 in \cite{dunwoody}]\label{Thm::Dunwoody}
	Let $\Gamma$ be a group and $\Delta \leq \Gamma$ a subgroup such that $\Gamma$ is generated by $\Delta$ together with a finite set of elements. For any ring $R \neq 0$, the following are equivalent:
	\begin{enumerate}[(a)]
		\item there exists a graph of groups $\gxt$ over a finite graph $\Xi$ with finite edge groups satisfying $\Gamma = \PAT$ and $\Delta \leq \Gs(v)$ for some $v \in V(\Xi)$;
		\item there exists a non-zero derivation $f \colon \Gamma \to R[\Gamma]$ such that $\Delta \leq \Ker(f)$.
	\end{enumerate}
\end{theorem}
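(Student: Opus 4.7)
The two directions of Theorem \ref{Thm::Dunwoody} are of very different character: (a)$\Rightarrow$(b) is a concrete construction, while (b)$\Rightarrow$(a) is the substantive half requiring Dunwoody's machinery.

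For (a)$\Rightarrow$(b), the plan is to read off a derivation from the Bass--Serre tree $T$ of $\gxt$. Since $T$ is a tree, the augmentation sequence of left $R[\Gamma]$-modules
\[
0 \to R[E(T)] \xrightarrow{\partial} R[V(T)] \xrightarrow{\varepsilon} R \to 0
\]
is exact. Fix $v_0 \in V(T)$ lying over $v$, so $\Gs(v) \geq \Delta$ fixes $v_0$ pointwise. The map $\phi(g) = g v_0 - v_0$ is a left derivation into $\Ker(\varepsilon) = \Img(\partial)$, hence lifts uniquely to $\widetilde{\phi}\colon \Gamma \to R[E(T)]$ sending $g$ to the signed edge-sum along the geodesic $[v_0, g v_0]$. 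Since every edge stabiliser $G_e$ is finite, the $\Gamma$-equivariant norm map $R[\Gamma/G_e] \hookrightarrow R[\Gamma]$, $x G_e \mapsto x\sum_{h \in G_e} h$, assembles into an embedding of $R[E(T)]$ into a finitely generated free left $R[\Gamma]$-module; composing with $\widetilde{\phi}$ yields the sought derivation, and passing to a single non-zero coordinate gives an $f\colon \Gamma \to R[\Gamma]$ if required. By construction $\Delta \leq \Ker(f)$, as $\Delta$ fixes $v_0$.

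For (b)$\Rightarrow$(a), the plan is to follow Dunwoody's original combinatorial strategy. From the derivation $f$ one extracts a $\Gamma$-equivariant family $\mathcal{E}$ of \emph{almost invariant} subsets $E \subseteq \Gamma$, where almost invariance is witnessed by the finite generating set of $\Gamma$ modulo $\Delta$ via the finite symmetric difference condition. The pivotal step is Dunwoody's cut-refinement procedure, which replaces $\mathcal{E}$ by a pairwise \emph{nested} $\Gamma$-invariant family; the hypothesis of finite generation modulo $\Delta$ is precisely what makes the relevant induction terminate. The Dicks--Dunwoody structure tree construction then manufactures a $\Gamma$-tree $T$ with finite edge stabilisers (inherited from the finite symmetric differences) and finite quotient graph (from the finite generation). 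Finally, $\Delta \leq \Ker(f)$ forces $\Delta$ to preserve every set in $\mathcal{E}$ and hence to fix a vertex of $T$; applying Bass--Serre theory to $\Gamma \curvearrowright T$ recovers the required $\gxt$.

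The main obstacle is unquestionably the extraction of a nested family in the (b)$\Rightarrow$(a) direction, which is the combinatorial heart of Dunwoody's theorem and which I would not attempt to circumvent. Every known proof of this implication reduces, in one way or another, to a cut-refinement argument, and I would expect to follow either Dunwoody's original treatment or the streamlined variant of Dicks--Dunwoody verbatim.
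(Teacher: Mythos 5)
This statement is not proven in the paper: it is imported verbatim as Theorem 5.2 of \cite{dunwoody} (with the refinement of direction (b)$\Rightarrow$(a) later cited as Theorem III.4.6 of \cite{dicks}), so there is no in-paper proof to compare against. On its own merits your sketch is sound and correctly apportions the difficulty. Your (a)$\Rightarrow$(b) argument --- the exact augmentation sequence of the Bass--Serre tree, the derivation $g \mapsto gv_0 - v_0$ lifted through $\partial$, and the norm-element embedding $xG_e \mapsto x\sum_{h \in G_e} h$ of $R[\Gamma/G_e]$ into a free module --- is exactly the classical construction, and it is worth noting that it is the same mechanism the paper transplants to the profinite category in Proposition \ref{Prop::Partial Subgroup Accessibility}, where the derivation on a positive vertex group is defined by $f_\tau(g) = \sum_{k \in \Gs(e)} k(g-1)$, i.e.\ the norm element of the separating edge group applied to $g-1$. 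For (b)$\Rightarrow$(a) you correctly identify that everything reduces to Dunwoody's extraction of a nested $\Gamma$-invariant family of almost invariant sets and the ensuing structure-tree construction; you defer this to the literature rather than prove it, which is exactly what the paper does too. One cosmetic caveat: as literally printed, both (a) and (b) are vacuously satisfiable (the one-vertex graph of groups, respectively the zero derivation), so the intended statement carries implicit non-triviality hypotheses ($\Gamma = \Pi_1^{\mathrm{abs}}\gxt$ with a genuine decomposition, respectively a non-inner derivation); your proof addresses the intended statement, but you should make those hypotheses explicit if you write it out in full.
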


We shall construct a profinite analogue of the direction $(a) \Rightarrow (b)$ in Proposition~\ref{Prop::Partial Subgroup Accessibility}, which we then refine to yield Theorem~\ref{Thm::Subgroup Accessibility}. Together with the abstract direction $(b) \Rightarrow (a)$ in Theorem~\ref{Thm::Dunwoody}, this will enable us to prove in Theorem~\ref{Thm::Recognition} that certain amalgamated factors can be recognised in the profinite completions of virtually free groups. Specifically, we shall use the following refinement of the abstract direction $(b) \Rightarrow (a)$ which was proven by Dicks in \cite{dicks}.

\begin{theorem}[Theorem III.4.6 in \cite{dicks}]\label{Thm::Dicks_Dunwoody}
	Let $\Gamma$ be a group, $R \neq 0$ a ring, $P$ a projective $R[\Gamma]$-module and $f \colon \Gamma \to P$ a derivation. If $\Gamma$ is generated by $\Ker(f)$ together with a finite set of elements, then there exists a graph of groups $\ggx$ with finite edge groups such that $\Gamma = \PA$ and $\Ker(f) = \Gs(v)$ for some vertex $v \in V(\Xi)$.
\end{theorem}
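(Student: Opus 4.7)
The plan is to refine Dunwoody's construction of a $\Gamma$-tree from a derivation so that the base vertex stabiliser is exactly $\Ker(f)$, not merely contained in a vertex group. The first step is to reduce to the case of a finite-rank free module. Since $P$ is projective, pick $Q$ with $P \oplus Q \cong R[\Gamma]^{(I)}$ and extend $f$ by zero on $Q$ to a derivation $\tilde f \colon \Gamma \to R[\Gamma]^{(I)}$ with $\Ker(\tilde f) = \Ker(f)$. Fix a finite set $S \subseteq \Gamma$ with $\Gamma = \langle \Ker(f), S \rangle$. Writing any $g \in \Gamma$ as a word in $\Ker(f) \cup S \cup S^{-1}$ and iterating the derivation identity, the contributions from $\Ker(f)$-letters vanish while those from $S^{\pm 1}$-letters lie in $\sum_{s \in S} \tilde f(s) R[\Gamma]$; hence $\tilde f$ lands inside a free $R[\Gamma]$-submodule spanned by the finitely many coordinates of $R[\Gamma]^{(I)}$ actually hit by $\tilde f(S)$. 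Restricting the codomain accordingly reduces to $P = R[\Gamma]^n$ with $n$ finite and unchanged kernel.

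Next, associate to $f$ the right affine action $p \cdot g := pg + f(g)$ of $\Gamma$ on $R[\Gamma]^n$, whose stabiliser at $0$ is tautologically $\Ker(f)$. Following the track/almost-invariant-set argument underlying direction $(b) \Rightarrow (a)$ of Theorem \ref{Thm::Dunwoody}, the coordinates of $f$ determine almost $\Gamma$-invariant subsets of $\Gamma$; their translates under $\Gamma$ admit a nesting structure which yields a $\Gamma$-tree $T$ with finite edge stabilisers and a distinguished vertex $v_0$ whose stabiliser equals $\Stab[\Gamma]{0} = \Ker(f)$. The finite generation hypothesis guarantees that the orbits of $v_0$ and of the finitely many edges incident to it suffice to cover $T$, so $\Gamma \bs T$ is a finite graph. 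Applying the Bass--Serre structure theorem then produces the desired graph of groups $(\Gs, \Xi)$ over a finite graph $\Xi$ with finite edge groups, $\Gamma = \PA$, and $\Gs(v_0) = \Ker(f)$.

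The main obstacle is achieving equality rather than mere containment at $v_0$: Theorem \ref{Thm::Dunwoody} by itself yields only $\Ker(f) \leq \Gs(v_0)$. Ruling out extraneous elements in $\Gs(v_0) \setminus \Ker(f)$ is exactly where both the projectivity of $P$ (ensuring the embedding in a free module preserves the kernel) and the finite generation hypothesis (keeping the almost-invariant-set combinatorics finite and the quotient graph finite) must be jointly exploited. Modulo this refinement, the argument tracks Dunwoody's original proof, with the affine action viewpoint providing the missing tightness at the base vertex.
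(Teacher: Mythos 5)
First, a point of comparison: the paper does not prove Theorem \ref{Thm::Dicks_Dunwoody} at all --- it is imported verbatim from Dicks (Theorem III.4.6 of \cite{dicks}) and used as a black box --- so there is no in-paper argument to measure your attempt against. Judged on its own terms, your proposal has a genuine gap at precisely the point that distinguishes this statement from Theorem \ref{Thm::Dunwoody}. Your opening reduction is fine: composing $f$ with the inclusion $P \hookrightarrow P \oplus Q \cong R[\Gamma]^{(I)}$ and using the derivation law together with $\Gamma = \langle \Ker(f), S \rangle$ does confine the image to a finite-rank free direct summand without changing the kernel. The trouble is the core of the argument. The affine action $p \cdot g = pg + f(g)$ on $R[\Gamma]^n$ does satisfy $\Stab[\Gamma]{0} = \Ker(f)$, but the tree $T$ produced by the almost-invariant-set construction is not the module: its vertices are equivalence classes of nested almost-invariant sets, not points of $R[\Gamma]^n$, and you supply no $\Gamma$-equivariant mechanism carrying the point $0$ to a vertex $v_0$ with the same stabiliser. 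Dunwoody's construction yields only $\Ker(f) \leq \Gs(v_0)$, as you yourself observe, and the sentence ``Modulo this refinement, the argument tracks Dunwoody's original proof'' concedes exactly the content of the theorem: upgrading containment to equality is the entire reason the paper cites Dicks's sharper version rather than reusing Theorem \ref{Thm::Dunwoody}(b)$\Rightarrow$(a). As written, the proposal establishes nothing beyond what Theorem \ref{Thm::Dunwoody} already gives.

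Two further soft spots. The finiteness of $\Gamma \bs T$ is deduced from ``the finitely many edges incident to $v_0$,'' but $v_0$ may be incident to infinitely many edges of $T$; the correct argument passes to the subtree spanned by the translates of $v_0$ under $\Ker(f)$ and the finite generating set, and bounds the number of edge orbits from there. More importantly, projectivity of $P$ enters your argument only rhetorically (``ensuring the embedding in a free module preserves the kernel'' --- a triviality that holds for any module), whereas it must be the structural input that forces the base vertex stabiliser to be exactly $\Ker(f)$; in Dicks's treatment this is done by converting the derivation into a decomposition of relative augmentation ideals, from which the tree and the equality of stabilisers are read off, rather than by post hoc tightening of Dunwoody's tree. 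If you want a self-contained proof you must supply that step; otherwise the honest move is to cite \cite{dicks}, as the paper does.
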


\section{Profinite Subgroup Accessibility and Continuous Derivations} \label{Sec::Access} In this section, we prove the first of our two main results, Theorem~\ref{Thm::Subgroup Accessibility}, which associates to any accessible subgroup $H$ of a profinite group $G$ a continuous derivation of $G$ in a free module over its completed group algebra with kernel $H$. We commence with a general statement about the structure of continuous derivations on a graph of profinite groups over a finite graph.

\begin{proposition}\label{Prop::Glue}
	Let $\gxt$ be an injective graph of profinite groups over a finite graph $\Xi$, write $G = \PT$ and $T = \{t_e : e \in E(\Xi)\}$, and let $M$ be a profinite right $G$-module. Given a collection of continuous derivations $\{f_v \colon G(v) \to M \}_{v \in V(\Xi)}$ and an assignment $f_T \colon T  \to M$ satisfying \begin{equation}
		\left(f_{d_1(e)}(\partial_1(g)) - f_T(t_e)\right)t_e^{-1} \partial_0(g)^{-1}t_e +  f_{d_0(g)}(\partial_0(g)^{-1})t_e + f_T(t_e) = 0 \label{Eq::Lem_Ass}
	\end{equation}
	whenever $e \in E(\Xi)$ and $g \in \Gs(e)$, there exists a unique continuous derivation $ f \colon G \to M $ which extends $f_T$ on $T$ and satisfies $f\at{\Gs(v)} = f_v$ for any $v \in V(\Xi)$.
\end{proposition}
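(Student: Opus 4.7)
My plan is to recast the problem via the correspondence, recalled in Section \ref{Sec::Prelim:Juxta}, between continuous right derivations $f \colon G \to M$ and continuous homomorphic sections $F \colon G \to G \ltimes M$ of the first projection. Under this equivalence, the vertex derivations $f_v$ and the values $f_T$ translate into partial sections: on each vertex group, the assignment $g \mapsto (g, f_v(g))$ is a continuous homomorphism $\Gs(v) \to G \ltimes M$ (precisely because $f_v$ is a derivation), and on the stable letters we may declare $t_e \mapsto (t_e, f_T(t_e))$, viewing $t_e$ as an element of $G$.

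To amalgamate these data I would invoke the universal property of the profinite free product $W\ggx = \coprod_{v \in V(\Xi)} \Gs(v) \amalg \pZ[[T]]$ to obtain a unique continuous homomorphism $F \colon W\ggx \to G \ltimes M$ extending the above assignments. The substantive step is then to verify that $F$ factors through the quotient $G = W\ggx / N\gxt$; by continuity and normality this reduces to checking the vanishing of $F$ on the two families of generators of $N\gxt$. The crucial family consists of the edge relations $\partial_1(g) t_e^{-1} \partial_0(g) t_e$: unfolding $F$ on such an element inside $G \ltimes M$ via the group law $(a,m)(b,n) = (ab, mb + n)$ and the inverse formula $(a,m)^{-1} = (a^{-1}, -m a^{-1})$ yields a pair whose first coordinate is trivial in $G$ (by the defining relation) and whose second coordinate, after conversion using the derivation identity $f_v(g^{-1}) = -f_v(g)g^{-1}$, matches the left-hand side of (\ref{Eq::Lem_Ass}) verbatim. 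Thus the compatibility hypothesis is precisely what is needed.

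The spanning-tree family $\{t_e : e \in E(\Theta)\}$ imposes the additional constraint that $F(t_e) = (1, f_T(t_e))$ be trivial in $G \ltimes M$, i.e. $f_T(t_e) = 0$ on such edges; this should follow by specialising (\ref{Eq::Lem_Ass}) to $e \in E(\Theta)$, where the collapse $t_e = 1$ in $G$ together with the derivation identity on inverses forces the desired cancellation. Once both families are handled, $F$ descends to a continuous homomorphism $G \to G \ltimes M$ and $f := \operatorname{proj}_2 \circ F$ is the sought continuous derivation, satisfying $f\at{\Gs(v)} = f_v$ and $f(t_e) = f_T(t_e)$ by construction. Uniqueness is immediate: a second derivation with the prescribed restrictions induces a section agreeing with $F$ on the vertex groups and on $T$, hence on a topologically generating set of $W\ggx$, and therefore everywhere by universality.

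The principal obstacle I anticipate is the bookkeeping inside the semidirect product: matching the inverse-placements and the right-actions in the expansion so that (\ref{Eq::Lem_Ass}) emerges exactly as stated, and handling the spanning-tree generators cleanly. Once that identification is pinned down, the remainder of the argument is formal machinery dictated by the universal property of the profinite free product, the closedness of $N\gxt$, and the standard derivation/section dictionary.
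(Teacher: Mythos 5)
Your architecture coincides with the paper's: pass to sections of $G \ltimes M \to G$, assemble a homomorphism $F$ on $W\ggx$ by the universal property of the coproduct, check that $F$ kills the generators of $N\gxt$, and descend. Your expansion of $F(\partial_1(g)t_e^{-1}\partial_0(g)^{-1}t_e)$ in the right semidirect product is exactly the paper's computation, and its second coordinate does reproduce the left-hand side of (\ref{Eq::Lem_Ass}), so the edge-relation family is handled correctly, as is uniqueness. The one point of substantive divergence is the extension over $\pZ[[T]]$: the closed subgroup $\sge$ of $W\ggx$ is a copy of $\pZ$, not of $\Z$, so ``declaring'' $t_e \mapsto (t_e, f_T(t_e))$ requires an argument that this assignment extends to a continuous homomorphism on the procyclic closure. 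Your appeal to the universal property of the free profinite group on the finite set $T$ does supply this; the paper instead builds the extension explicitly on each $\sge$ by splitting $0 \to \pZ[[\sge]] \xrightarrow{\cdot(t_e-1)} \pZ[[\sge]] \to \pZ \to 0$ and setting $F_e(g) = (g, \rho(g-1)f_T(t_e))$. Both are legitimate, but you should at least flag that an extension is being performed.

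The genuine error is your treatment of the spanning-tree generators. You claim that specialising (\ref{Eq::Lem_Ass}) to $e \in E(\Theta)$ forces $f_T(t_e) = 0$. It does not: substituting $t_e = 1$ into (\ref{Eq::Lem_Ass}) rearranges to $f_{d_1(e)}(\partial_1(g))\partial_0(g)^{-1} + f_{d_0(e)}(\partial_0(g)^{-1}) = f_T(t_e)\left(\partial_0(g)^{-1} - 1\right)$, which at $g = 1$ is the tautology $0 = 0$ and which is entirely vacuous when $\Gs(e)$ is trivial, so the hypothesis places no constraint on $f_T(t_e)$. The vanishing of $f_T$ on tree edges is not a consequence of (\ref{Eq::Lem_Ass}) but a necessary precondition for the conclusion to be attainable at all (since $t_e = 1$ in $G$ forces $f(t_e) = 0$ for any derivation $f$); it must be read as an implicit normalisation of the data, and the paper accordingly just sets the partial homomorphism to zero on these generators rather than deriving anything. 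Replace your ``this should follow by specialising (\ref{Eq::Lem_Ass})'' with that observation and the proof closes.
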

\begin{proof}
	Having fixed the spanning tree $\Theta$ for $\Xi$, we identify each group $\Gs(x)$ with its image $\Gs(x) \leq G = \PT$. By definition of the profinite fundamental group (cf. Section \ref{Sec::Prelim:Trees}), we have $G = W/N$, where
	\[
	W = \coprod_{v \in V(\Xi)} \Gs(v) \amalg \mathscr{F}(T)
	\]
	and $N$ is the minimal closed normal subgroup containing the sets $\{t_e : e \in E(\Theta)\}$ and $\{\partial_1(g)t_e^{-1}\partial_0(g)^{-1}t_e : e \in E(\Xi), g \in \Gs(e)\}$. Given a vertex $v \in V(\Xi)$, define the mapping $F_v \colon \Gs(v) \to G \ltimes M$ as $F(g) = (g,f_v(g))$, which forms a continuous homomorphism into the right semi-direct product $G \ltimes M$ (cf. Remark~\ref{Rem::Sections}). On the other hand, given an edge $e \in E(\Xi)$, define the continuous homomorphism $F_e \colon \sge \to G \ltimes M$ as follows. If $e \in E(\Theta)$, then $t_e = 1$ and we set $F_e = 0$. Otherwise, assume that $e \notin E(\Theta)$, and observe $\langle t_e \rangle \cong \Z$ and $\sge \cong \pZ$ as $W$ has been constructed to include the full profinite completion $\pZ \cdot t_e$ of $\langle t_e \rangle$. Now $\langle t_e \rangle \cong \Z$ is cohomologically good and the trivial $\langle t_e \rangle$-module $\Z$ has free resolution given by
	\begin{equation}
		0 \rightarrow \Z[\langle t_e \rangle] \xrightarrow{\mu} \Z[\langle t_e \rangle] \xrightarrow{\varepsilon} \Z \rightarrow 0 
	\end{equation}
	where $\mu$ is multiplication by $(t_e - 1)$. Thus, Lemma~\ref{Lem::Resolution of pZ} gives a short exact sequence of abelian profinite groups
	\begin{equation}
	0 \rightarrow \pZ[[\sge]] \xrightarrow{\mu} \pZ[[\sge]] \xrightarrow{\varepsilon} \pZ \rightarrow 0 
	\end{equation}
	where $\mu(x) = x(t_e-1)$ and which is split as $\pZ$ is free profinite. Hence, there exists a retraction $\rho$ of $\mu$, and we set
	\[
	F_e(g) = (g,\rho(g-1)\cdot f_T(t_e))
	\]
	where the action of $\pZ[[\sge]]$ on $M$ is induced by the action of $\pZ[[G]]$. Having defined $F_v$ and $F_e$, the universal property of the coproduct yields a homomorphism $F \colon W \to G \ltimes M$ which extends $F_v$ and $F_e$ for all $v \in V(\Xi)$ and $e \in E(\Xi)$. We claim that $F\at{N} = 0$; for this it will suffice to show that $F$ is zero on all generators of $N$ as a closed normal subgroup. If $e \in E(\Theta)$ then $F(t_e) = 0$ by definition. On the other hand, if $e \in E(\Xi) - E(\Theta)$ and $g \in \Gs(e)$, then $F(\partial_1(g)t_e^{-1}\partial_0(g)^{-1}t_e)$ evaluates as the expression in (\ref{Eq::Lem_Ass}), which is zero by assumption. It follows that $F$ factors through a continuous homomorphism $\widetilde{F} \colon G \to G \ltimes M$ such that the diagrams
	\begin{equation}
		\begin{tikzcd}
			\Gs(v) \arrow[r, hook] \arrow[d, hook] \arrow[rd, "F_v"] & G \arrow[d, "\widetilde{F}"] & \sge \arrow[r, hook] \arrow[d, hook] \arrow[rd, "F_e"] & G \arrow[d, "\widetilde{F}"] \\
			W \arrow[r, "F"]                                         & G \ltimes M                  & W \arrow[r, "F"]                                       & G \ltimes M                 
		\end{tikzcd}
	\end{equation}
	commute whenever $v \in V(\Xi)$ and $e \in E(\Xi)$. Now $f = \operatorname{proj}_2\widetilde{F}$ is a continuous derivation (cf. Remark~\ref{Rem::Sections}) which agrees with $f_v$ and $f_T(t_e)$ for all $v \in V(\Xi)$ and $e \in E(\Xi)$. As the images of $\Gs(v)$ and $t_e$ generate $G$ topologically, $f$ is unique among continuous derivations with this property.
\end{proof}

With this tool at hand, we proceed to prove the profinite analogue of direction $(a) \Rightarrow (b)$ in Theorem~\ref{Thm::Dunwoody}. The proof is a transfiguration of Dunwoody's argument, adjusted substantially to accommodate the eccentricities of the profinite world.

\begin{proposition}
\label{Prop::Partial Subgroup Accessibility}
Let $G$ be a profinite group, $H \leq G$ be a closed subgroup accessible within $G$ via some graph of profinite groups $\gxt$, and $R$ be a profinite ring. For any vertex $w \in V(\Xi)$, there exists a continuous derivation $f \colon G \to R[[G]]$ satisfying $\Gs(w) \cap \Ker(f) = \Gs(w) \cap H$.
\end{proposition}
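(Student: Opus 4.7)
I would prove the proposition by constructing an explicit inner derivation, exploiting the fact that the intersection $K := \Gs(w) \cap H$ is forced to be a finite subgroup of $G$.

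By the accessibility hypothesis, pick a vertex $v \in V(\Xi)$ with $\Gs(v) = H$. If $w = v$, then $K = \Gs(w) = H$ and the zero derivation $f = 0$ trivially satisfies $\Gs(w) \cap \Ker(f) = \Gs(w) = \Gs(w) \cap H$. So assume $w \neq v$ and pass to the profinite structure tree $T$ of $\gxt$, in which $\Gs(w)$ and $H$ are the stabilisers of distinct vertices $\widetilde{w}, \widetilde{v} \in V(T)$ (distinct because they lie in different sheets $G/\Gs(w)$ and $G/\Gs(v)$ of $V(T)$). Any $g \in K$ fixes both endpoints of the minimal subtree $[\widetilde{w}, \widetilde{v}]$, and invoking the profinite analogue of the standard Bass--Serre fixed-path principle for actions without inversion, $g$ fixes every vertex and edge of this subtree pointwise. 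Since the subtree contains at least one edge $\widetilde{e}$ and edge stabilisers in $T$ are conjugates of the finite edge groups of $\gxt$, we conclude $K \leq \operatorname{Stab}_G(\widetilde{e})$ is finite.

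With $K$ finite, the element $\tau := \sum_{k \in K} k \in R[[G]]$ is well defined, and I would take $f(g) := \tau g - \tau$. A direct check confirms that this is a continuous right derivation of $G$ in $R[[G]]$. To identify its kernel, observe that the $R$-linear independence of distinct elements of $G$ inside the completed group algebra (valid for any non-zero profinite ring $R$) reduces the condition $\tau g = \tau$ to the set-theoretic equality $Kg = K$ in $G$, which holds if and only if $g \in K$. Hence $\Ker(f) = K$, and since $K \subseteq \Gs(w)$ by construction, we obtain $\Gs(w) \cap \Ker(f) = K = \Gs(w) \cap H$ as required.

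The principal obstacle is the finiteness of $K$. In the abstract world this is immediate from elementary Bass--Serre theory, but in the profinite setting the minimal subtree $[\widetilde{w}, \widetilde{v}]$ may exhibit the pathological behaviour flagged in the introduction, so one must appeal carefully to the profinite tree machinery of Section \ref{Sec::Prelim:Trees} to guarantee that an element fixing both endpoints fixes every edge on the subtree. Once this rigidity is secured, the derivation itself arises by a transparent inner construction in the completed group algebra, bypassing the need to appeal to Proposition \ref{Prop::Glue}.
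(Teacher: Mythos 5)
Your argument is correct, but it is a genuinely different proof from the paper's. The paper never passes to the structure tree here: it builds a \emph{non-inner} derivation by hand, setting $f_\tau(g) = \sum_{k \in \Gs(e)} k(g-1)$ on the vertex groups lying on the far side of the first edge $e$ of the geodesic $[v,w]$ in the spanning tree, zero on the near side, with matching values on stable letters, and then glues these local data into a global continuous derivation via Proposition \ref{Prop::Glue}; the resulting $f$ satisfies $\Gs(v) \subseteq \Ker(f)$ and $\Ker(f)\cap\Gs(w) = \Gs(v)\cap\Gs(w)$. You instead observe that $K = \Gs(w)\cap H$ is finite --- because it fixes two distinct vertices of the structure tree, hence (by \cite[Corollary 4.1.6]{Ribes_Graphs}, exactly as in Lemma \ref{Lem::Relative Malnormality}) stabilises an edge, and edge stabilisers are conjugates of the finite edge groups --- and then take the inner derivation $g \mapsto \tau g - \tau$ with $\tau = \sum_{k\in K}k$, whose kernel is exactly $K$. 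This is shorter, bypasses Proposition \ref{Prop::Glue} entirely, and does establish the literal statement (the existence of an edge in $[\widetilde v,\widetilde w]$ is secured by Remark \ref{Rem::Closed Edge Set} together with \cite[Proposition 2.1.6(c)]{Ribes_Graphs}, and both proofs tacitly need $R \neq 0$). What your route gives up is everything the paper actually uses downstream: the proof of Theorem \ref{Thm::Subgroup Accessibility} needs $H \subseteq \Ker(f_w)$ to get the inclusion (\ref{Eq::One side of inclusion}), and the computation (\ref{Eq::Huge comp 1})--(\ref{Eq::Huge comp 4}) invokes ``the explicit definition of $f$ given in the proof of Proposition \ref{Prop::Partial Subgroup Accessibility}'', i.e.\ the formula $\sum_{k}k(g-1)$ on each positive vertex group. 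Your derivation has finite kernel $K$, so $H \not\subseteq \Ker(f)$ whenever $H$ is infinite, and it could not be substituted into that later argument without restructuring it; the proposition's statement is simply weaker than what the paper's construction delivers and relies on.
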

\begin{proof}
Let $G$ be a profinite group which decomposes as a graph of groups $\PT$, where the spanning tree $\Theta \subseteq \Xi$ is fixed and we shall write $\partial_x \colon \Gs(x) \to G = \PT$ to denote the associated inclusion of profinite groups over a given point $x \in \Xi$. We note that $\partial_x$ is indeed a monomorphism by Proposition \ref{Prop::profinite graph of profinite groups}. Similarly, write \[
\widehat{\partial_x} \colon R[\Gs(x)] \xhookrightarrow{\iota} R[[\Gs(x)]] \xhookrightarrow{\partial^*_x} R[[G]]
\] where $\iota$ is the profinite completion of rings and $\partial_x^*$ is the map induced by $\partial_x$ on completed group algebras.

Assume that the closed subgroup $H \leq G$ is accessible, so that there is a vertex $v \in V(\Xi)$ with $H = \Gs(v)$. If $v = w$ then the zero derivation yields the result; assume thus $v \neq w$. Write $[v,w]$ to denote the minimal subtree of $\Theta$ which contains both $v$ and $w$, and define $e$ to be the unique edge in $[v,w]$ which is incident at $v$. Given any other vertex $\tau \in V(\Xi)$, we shall say that $\tau$ is positive ($\tau > v$) if $[v,\tau]$ contains $e$, neutral if $\tau = v$, and negative ($\tau < v$) if it is neither positive nor neutral. Similarly, an edge $\eta$ shall be positive ($\eta > e$) if both its initial and terminal vertex are positive, neutral ($\eta \approx e$) if exactly one of its initial and terminal vertices is positive, and negative ($\eta < e$) if it is neither positive nor neutral. Note that the only neutral vertex is $v$ but there might be neutral edges other than $e$ if $\Xi$ is not a tree.

We shall define the continuous derivation $f$ separately on all vertex groups and stable letters, and then demonstrate that these definitions glue together appropriately. Given a vertex $\tau \in V(\Xi)$, define the continuous map
\[
f_\tau \colon \Gs(\tau) \xrightarrow{\quad} R[\Gs(\tau)] \xhookrightarrow{\widehat{\partial}_\tau} R[[G]]
\]
as
\[f_\tau(g) = \sum_{k \in \Gs(e)} k(g-1) \]
if $\tau > v$, and
\[f_\tau = 0\]
otherwise. We note that, depending on whether $\tau > v$ or $\tau \leq v$, these maps are precisely the inner derivations associated to the elements $\sum_{k \in \Gs(e)} k \in R[[G]]$ and $0 \in R[[G]]$, respectively, so in particular they form continuous right derivations of $G$ in $R[[G]]$ (cf. \cite[Section 6.8]{RZ}. On the other hand, let $\eta \in E(\Xi)$ be an edge and $t_\eta$ the corresponding stable letter. If $\eta \in \Theta$ then $t_\eta = 1$, and we set $f(t_\eta) = 0$. Otherwise, assume $\eta \in \Xi - \Theta$ and proceed by cases depending on the sign of $\eta$. Set
\[
f(t_\eta) = \sum_{k \in \Gs(e)} k(t_\eta - 1)
\]
if $\eta > e$,
\[
f(t_\eta) = - \sum_{k \in \Gs(e)} k
\]
if $\eta \approx e$, and
\[
f(t_\eta) = 0
\]
if $\eta < 0$. In this way, we have defined continuous derivations $f_\tau$ in $R[[G]]$ on all vertex groups of $\ggx$, and an assignment of stable letters to values in $R[[G]]$. By Proposition~\ref{Prop::Glue}, this data induces a continuous derivation $f \colon G \to R[[G]]$, provided we show that
\begin{equation} \label{Eq::Target derivation}
	\left(f_{d_1(\eta)}(\partial_1(g)) - f(t_\eta)\right)t_\eta^{-1} \partial_0(g)^{-1}t_\eta + f_{d_0(\eta)}(\partial_0(g)^{-1})t_\eta + f(t_\eta) = 0
\end{equation}
whenever $\eta \in E(\Xi)$ and $g \in \Gs(\eta)$. Indeed, choose $\eta \in E(\Xi)$ and write $\alpha = d_0(\eta)$ and $\beta = d_1(\eta)$. We proceed again by cases on $\eta$. If $\eta < e$ then $f(t_\eta) = f_\alpha (g) = f_\beta(g) = 0$ for all $g \in \Gs(\eta)$, so there is nothing to prove. If $\eta > e$ then (\ref{Eq::Target derivation}) evaluates as
\begin{equation}
		\begin{split}
			\sum_{k \in \Gs(e)} k(\partial_1(g) - 1)  t_\eta^{-1} \partial_0(g)^{-1}t_\eta -
			\sum_{k \in \Gs(e)} k(t_\eta - 1)t_\eta^{-1} \partial_0(g)^{-1}t_\eta \\
			+ \sum_{k \in \Gs(e)} k(\partial_0(g)^{-1} - 1)t_\eta + \sum_{k \in \Gs(e)} k(t_\eta - 1)
		\end{split} 
\end{equation}
which we can re-write using the identity $\partial_1(g) = t_\eta^{-1}\partial_0(g)t_\eta$ in $G$, yielding
\begin{equation}
	\begin{split}
		\sum_{k \in \Gs(e)} k \Big(1 - t_\eta^{-1} \partial_0(g)^{-1}t_\eta - \partial_0(g)^{-1}t_\eta + t_\eta^{-1} \partial_0(g)^{-1}t_\eta \\
		+ \partial_0(g)^{-1}t_\eta - t_\eta + t_\eta - 1 \Big)
	\end{split} 
\end{equation}
which reduces to zero. Finally, suppose that $\eta \approx e$ and distinguish further between the cases $\eta = e$ and $\eta \notin \Theta$. In the former case, $t_\eta = 1$ in $G$ and $\partial_0= \partial_1$ on $\Gs(\eta)$. Thus (\ref{Eq::Target derivation}) reduces to
\begin{equation}
	f_\beta(g) - f_\alpha(g) = \pm \sum_{k \in \Gs(e)} k(g - 1) = 0
\end{equation}
where the first equality holds as precisely one of $\alpha, \beta$ is positive and the other is negative, while the second equality holds since $g \in \Gs(\eta) = \Gs(e)$. On the other hand, if $\eta \approx e$ but $\eta \notin \Theta$, then precisely one of $\alpha, \beta$ is positive; assume w.l.o.g. that $\beta$ is positive and $\alpha$ is negative. Then (\ref{Eq::Target derivation}) evaluates as
\begin{equation}
		\sum_{k \in \Gs(e)} k(\partial_1(g) - 1)  t_\eta^{-1} \partial_0(g)^{-1}t_\eta +
		\sum_{k \in \Gs(e)} k t_\eta^{-1} \partial_0(g)^{-1}t_\eta  - \sum_{k \in \Gs(e)} k
\end{equation}
which we can re-write using the identity $\partial_1(g) = t_\eta^{-1}\partial_0(g)t_\eta$ in $G$, yielding
\begin{equation}
	\sum_{k \in \Gs(e)} \left( k - kt_\eta^{-1} \partial_0(g)^{-1}t_\eta + kt_\eta^{-1} \partial_0(g)^{-1}t_\eta - k \right)
\end{equation}
which reduces to zero. We conclude that the collection of continuous derivations $\{f_\tau \colon \Gs(\tau) \to R[[G]]\}_{\tau \in V(\Xi)}$ and the assignment $f : \{t_e\}_{e \in E(\Xi)} \to R[[G]]$ are compatible in the sense of Proposition~\ref{Prop::Glue}, yielding a continuous derivation
\[
f \colon G \to R[[G]]
\]
which extends the assignment and $f_\tau$ for all $\tau \in V(\Xi)$. It remains only to show that $f$ has the postulated properties. Indeed, $f(\Gs(v)) = f_v(\Gs(v)) = 0$, so $\Gs(v) \subseteq \Ker(f)$ and $\Gs(w) \cap \Gs(v) \subseteq \Gs(w) \cap \Ker(f)$. For the opposite of the latter inequality, note that $w > v$ by construction, so
\[
f\at{\Gs(w)}(g) = \sum_{k \in \Gs(e)} k(g-1)
\]
which has a non-zero $g$-coordinate unless $g \in \Gs(e) = \Gs(v) \cap \Gs(w)$. We conclude that $\Ker(f) \cap \Gs(w) = \Gs(v) \cap \Gs(w)$ and the proof is complete.
\end{proof}

Having established Proposition~\ref{Prop::Partial Subgroup Accessibility}, we are now ready to prove the main result of this section, Theorem~\ref{Thm::Subgroup Accessibility}. The idea is to construct a product of derivations, each given by Proposition~\ref{Prop::Partial Subgroup Accessibility}, and then to demonstrate that the kernel of the product is precisely the accessible subgroup. For lack of a normal form theorem for arbitrary elements of a graph of profinite groups, the latter becomes an issue of asymptotics within a profinite group, which we solve using a systematic approximation of elements and their images under the derivation, relative to open subgroups of the domain and codomain whose compatibility we chase.

\MTA*

\begin{proof}
    Assume that $G = \PT$ for some graph of profinite groups $\gxt$ over a finite graph $\Xi$ with spanning tree $\Theta$ such that $\gxt$ has finite edge groups and the closed subgroup $H \leq_c G$ arises as a vertex group $H = \Gs(v)$ for some vertex $v \in V(\Xi)$. Let $\Gamma = \PAT$ be the abstract fundamental group. By Proposition~\ref{Prop::profinite graph of profinite groups}, the profinite group $G$ is isomorphic to the completion of $\Gamma$ with respect to the profinite topology determined by the the neighbourhood basis \[
    \mathcal{U} = \{N \trianglelefteq_f \PAT \mid \forall x\in \Xi, \, N \cap \Gs(x) \trianglelefteq_o \Gs(x)\}
    \] for the identity. Moreover, the latter part of Proposition~\ref{Prop::profinite graph of profinite groups} states that the completion map $\iota \colon \Gamma \to G$ is a monomorphism. Thus, we may identify $\Gamma$ with a dense subgroup of $G$ and shall omit $\iota$ from notation. Given an open normal $U \trianglelefteq_o G$, we shall denote the canonical projection as $\pi_U \colon G \to G/U$, and we will abbreviate $X_U := \pi_U(X)$ and $h_U := \pi_U(h)$ for any $X \subseteq G$ and $h \in G$. Finally, write $\wE = E(\Xi) - E(\Theta)$ for the collection of edges in $\Xi$ with non-trivial stable letters.
	
	When restricted to $\Theta$, the graph of groups $\ggx$ forms a tree of groups $(\Gs, \Theta)$, whose fundamental group $\PP[(\Gs,\Theta,\Theta)]$ generates $\Gamma$ together with the collection of stable letters $\{t_e : e \in \wE\}$. Write $N = \overline{\langle \langle \PP[(\Gs,\Theta,\Theta)] \rangle \rangle}$ for the minimal closed normal subgroup of $G$ containing $\PP[(\Gs,\Theta,\Theta)]$. Furthermore, let $\Phi = \langle t_e : e \in \wE \rangle$ be the subgroup generated by the stable letters, and $\wF$ be its closure within $G$. By \cite[Lemma 3.5]{Zalesskii_Melnikov}, the quotient $q \colon G \to G/N$ restricts to an isomorphism $q\at{\wF} \colon \wF \to G/N$ and $\wF$ is free profinite on the set of stable letters $\{t_e : e \in \wE\}$ associated to edges in $\wE$.\footnote{The cited lemma states that $G/N$ is free profinite on the image of $\{t_e : e \in \wE\}$ under the quotient $q$. But the restriction $q\at{\wF} \colon \wF \to G/N$ admits a surjective section $\sigma \colon G/N \to \wF$ generated by $\sigma \colon q(t_e) \mapsto t_e$, so the composition $\sigma \circ q\at{\wF}$ is also surjective and an isomorphism by \cite[Proposition 2.5.2]{RZ}. In particular, $q\at{\wF} \colon \wF \to G/N$ is injective and must also form an isomorphism.} In particular, this means $\wF = \PP[\hgx]$ for some graph of profinite groups $\hgx$ whose vertex groups are either  $\sge \cong \pZ$ or trivial.\footnote{Let $\Upsilon$ be the graph with vertex set given by the symbols $V(\Upsilon) = \{\tau(e) : \wE\} \sqcup \{\tau_0\}$, edge set $E(\Upsilon) = \wE$ and adjacency maps
	$
		d_0(e) = \tau_0 \text{ and } d_1(e) = \tau(e)
	$
	whenever $e \in \wE$. Define the graph of profinite groups $\hgx$ as
	\[
	\Hs(\tau) = \begin{cases}
		\sge, &\tau = \tau(e) \ \\
		1, &\tau = \tau_0 \\
	\end{cases}
	\]
	whenever $\tau \in V(\Upsilon)$ and with trivial edge groups and edge inclusions. The profinite fundamental group then computes as the free profinite group $\PP[\hgx] = \wF$.}

	We now commence our construction of the postulated derivation as a product of derivations given by Proposition~\ref{Prop::Partial Subgroup Accessibility}. Given any vertex $w \in V(\Xi) - \{v\}$, Proposition~\ref{Prop::Partial Subgroup Accessibility} with graph of groups $\gxt$, accessible subgroup $H = \Gs(v)$ and distinguished vertex $w$ yields a continuous derivation $f_w \colon G \to R[[G]]$ such that $H\subseteq \Ker(f_w)$ and $\Ker(f_w) \cap \Gs(w) = H \cap \Gs(w)$. Similarly, given any edge $e \in \wE$, Proposition~\ref{Prop::Partial Subgroup Accessibility} with graph of groups $\hgx$, accessible subgroup $\Hs(\tau_0) = 1$ and distinguished vertex $\tau(e)$ yields a continuous derivation $f_e' \colon \wF \to R[[\wF]]$ with $\sge \cap \Ker(f_e') = \Hs(\tau(e)) \cap \Hs(\tau_0) = 1$. Set
	\[
	f_e \colon G \xrightarrow{ \, \, q \, \,} \wF \xrightarrow{f_e'} R[[\wF]] \xrightarrow{i^*} R[[G]]
	\]
	whenever $e \in \wE$ is an edge with non-trivial stable letter. We define the continuous derivation
	\[
	f \colon G \to R[[G]]^n
	\]
	as
	\[
	f = \prod_{w \in V(\Xi) - \{v\}} f_w \times \prod_{e \in \wE} f_e
	\]
	where $n = |V(\Xi)| - 1 + |\wE|$. We note  that 
    \begin{equation}
        n = |V(\Xi)| - 1 + |\wE| = (|E(\Theta)| + 1) - 1 + |E(\Xi) - E(\Theta)| = |E(\Xi)|
    \end{equation} 
     is the number of edges in $\Xi$, as postulated. Then
	\begin{equation}\label{Eq::One side of inclusion}
		H \subseteq \bigcap_{w \in V(\Xi) - \{v\}} \Ker(f_w) \cap \bigcap_{e \in \wE} \Ker(f_e) = \Ker(f)
	\end{equation}
	so it remains only to show that the opposite inclusion holds as well. Given a subset $A \subseteq \Xi$ of vertices and edges, we shall write $\Lambda(A) = \langle \Gs(v) : v \in A \cap V(\Xi) \rangle \leq \Gamma$ and $\Sigma(A) = \langle t_e : e \in A \cap E(\Xi) \rangle \leq \Gamma$, as well as
	\[
	\Gamma(A) = \langle \Lambda(A), \Sigma(A) \rangle \leq \Gamma
	\]
	and
	\[
	G(A) = \overline{\Gamma(A)} \leq G
	\]
	the latter of which forms a closed subgroup of $G$. Given any $h \in G$, the collection of subsets $A \subseteq \Xi$ satisfying $h \in G(A)$ forms a finite partially ordered set $\mathcal{M}(A)$ under inclusion, so it must have a minimal element. Fix a choice function
	\[
	\Int \colon G \to \mathcal{P}(\Xi)
	\]
	with $\Int(h)$ minimal in $\mathcal{M}(h)$ for each $h \in G$, and refer to $\Int(h)$ as the \emph{intricacy} of $h$ in $G$. We note that $\Int(h) \subseteq \{v\}$ if and only if $h \in H$.
	
	\textbf{Claim:} For any element $h \in G$ and any open normal subgroup $V \trianglelefteq_o G$, there exists $\delta \in \Gamma$ such that $hV = \delta V$ and $\Int(h) = \Int(\delta)$.
	
	Indeed, write $\mathcal{N}(h)$ for the collection of proper subsets of $\Int(h)$. Then $h \in G(\Int(h))$ but $h \notin G(A)$ for each $A \in \mathcal{N}(h)$ by minimality of $\Int(h)$. As $\Xi$ is finite, so is $\mathcal{N}(h)$, and the set
	\[
	B = hV \, \cap \bigcap_{A \in \mathcal{N}(h)} G - G(A)
	\]
	must be open. Now $h \in G(\Int(h)) \cap B$, so in particular $G(\Int(h)) \cap B$ is non-empty. But $B \cap G(\Int(h)) = B \cap \overline{\Gamma(\Int(h))}$ by definition, so $\Gamma(\Int(h)) \cap B$ must be non-empty as well. It follows that there exists some
	\[
	\delta \in \Gamma(\Int(h)) \,\cap\, hV \,\cap \bigcap_{A \in \mathcal{N}(h)} G - G(A)
	\]
	whence the claim.
	
	We proceed to demonstrate that the opposite inclusion of (\ref{Eq::One side of inclusion}) holds as well. Indeed, suppose for a contradiction that there is $g \in \Ker(f) - H$. Given $A \subsetneqq \Int(g)$, we have $g_A \notin G(A)$ by minimality of $\Int(g)$, so the closure of $G(A)$ implies that there must be $U_A \trianglelefteq_o G$ with $g_AU_A \cap G(A) = \emptyset $. Similarly, as $\ggx$ has finite edge groups, there exists an open normal $U_e \trianglelefteq_o G$ for each $e \in E(\Xi)$ satisfying $\Gs(e) \cap U_e = 1$. As $E(\Xi)$ and $\Int(g) \subseteq \mathcal{P}(\Xi)$ are both finite, the normal subgroup
	\[
	U = \bigcap_{e \in E(\Xi)} U_e \,\,\, \cap \bigcap_{A \subsetneqq \Int(g)} U_A
	\]
	is open in $G$. Consider now the continuous group homomorphism
	\[
	F \colon G \xrightarrow{(\operatorname{id},f)} G \ltimes R[[G]]^n \xrightarrow{(\pi_U, \pi_U^*)} G/U \ltimes R[G/U]^n
	\]
	where $\pi_U^*$ is the induced projection of group algebras. We shall make use of the projections
    $
	\operatorname{proj}_1 \colon G/U \ltimes R[G/U]^n \to G/U
	$
	and
	$
	\operatorname{proj}_2 \colon G/U \ltimes R[G/U]^n \to R[G/U]^n
	$
	onto each factor, which are functions but not necessarily homomorphisms. Let $W = \Ker(F) = U \,\cap\, \Ker(\pi_U^*f)$ which is an open normal subgroup of $G$, as $F$ is a homomorphism and both $R$ and $[G:U]$ are finite. By the universal property of the quotient $\pi \colon G \to G/W$, there is a homomorphism $\widetilde{F}$, and hence a derivation $\widetilde{f}$, such that the diagram
	\begin{equation}
		\begin{tikzcd}
			& R[[G]]^n \arrow[dr, "\pi_U^*"] & \\
			G \arrow[r,"F"] \arrow[dr, "\pi_W"'] \arrow[ru, "f"]& G/U \ltimes R[G/U]^n \arrow[r,"\operatorname{proj}_2"] & R[G/U]^n \\ & G/W \arrow[u,"\widetilde{F}", dashed] \arrow[ur, "\widetilde{f}"', dashed] &
		\end{tikzcd}
	\end{equation}
	commutes. Given a vertex $w \in V(\Xi)$ or an edge $e \in \wE$ with non-trivial stable letter, write $\widetilde{f_w} \colon G/W \to R[G/U]$ or $\widetilde{f_e} \colon G/W \to R[G/U]$ for the projection of $\widetilde{f}$ onto the $w$-factor or $e$-factor, respectively. By the claim, there exists $\gamma \in \Gamma$ of the same intricacy as $g$ satisfying $\gamma W = gW$. Moreover, as $g \notin H$ by assumption, the set $\Int(\gamma) - \{v\}$ is non-empty. In particular, at least one of the sets $\Int(\gamma) \cap V(\Xi) - \{v\}$ and $\Int(\gamma) \cap E(\Xi)$ must be non-empty; we distinguish between these cases.
	
	\textbf{Case 1:} There exists a vertex $w \in \Int(\gamma) \cap V(\Xi) - \{v\}$.
	
	Write $[v,w]$ to denote the minimal subtree of $\Theta$ which contains $v$ and $w$ and let $e \in E([v,w])$ be the unique edge in $[v,w]$ which is incident at $v$. Write $\Lambda = \Gs(w)$ and $\Delta = \Gamma(\Int(\gamma) - \{w\})$, as well as $K = \Gs(e)$, so that $\gamma \in \langle \Lambda, \Delta \rangle$ but $\gamma \notin \Delta$ by minimality of $\Int(\gamma)$. By construction of $U$, we also have $\gamma_U = g_U \notin \Delta_U$. As $\gamma \in \langle \Delta , \Lambda \rangle$, there are tuples $\{\lambda^i : 1 \leq i \leq m\} \subseteq \Lambda$ and $\{\delta^i : 1 \leq i \leq m\} \subseteq \Delta$ such that
	\begin{equation}
		\gamma = \prod_{i=1}^m \delta^i \lambda^i
	\end{equation}
	is in reduced form, i.e. satisfying $\delta^i, \lambda^i \notin \Lambda \cap \Delta$ for all $i$ except possibly $\lambda^1$ and $\delta^n$. Given $1 \leq i \leq m$, write $\gamma^i = \prod_{j=i}^m \delta^j\lambda^j$ and $\gamma^{m+1} = 1$ for convenience. Moreover, let $\varepsilon_\Delta \colon R[G/U] \to R$ be the map of $R$-modules generated by $\varepsilon_\Delta(\delta_U) = 0$ for $\delta_U \in \Delta_U$ and $\varepsilon_\Delta(x_U) = 1$ for $x_U \in G/U-\Delta_U$. Then
	\begin{align}
		\varepsilon_\Delta \widetilde{f_w} (g_W) &= \varepsilon_\Delta \pi^*_U f_w \left(\gamma\right) \\
		&= \varepsilon_\Delta \left[\sum_{i=1}^m \sum_{\kappa \in K} \kappa_U(\lambda_U^i-1) \cdot \gamma^{i+1}_U \right] \label{Eq::Huge comp 1}\\
		&= \varepsilon_\Delta \left[ \sum_{\kappa \in K} \kappa_U\left((\delta^1_U)^{-1}\gamma_U - \sum_{i=2}^m(1 - (\delta_U^i)^{-1}) \cdot \gamma_U^i- 1 \right) \right] \label{Eq::Huge comp 2}\\
		&= \sum_{\kappa \in K} \left[1 - \varepsilon_\Delta \left(\sum_{i=2}^m(1 - (\delta_U^i)^{-1}) \cdot \gamma_U^i \right) \right] \label{Eq::Huge comp 3} \\
		&= |K| \neq 0 \label{Eq::Huge comp 4}
	\end{align}
	where Equation (\ref{Eq::Huge comp 1}) is obtained using the derivation law and the explicit definition of $f$ given in the proof of Proposition~\ref{Prop::Partial Subgroup Accessibility}, Equation (\ref{Eq::Huge comp 3}) follows from the equality $\varepsilon_\Delta(\kappa_U (\delta^1_U)^{-1} \gamma_U) = \varepsilon_\Delta(\gamma_U) = 1$, while (\ref{Eq::Huge comp 4}) follows from the conjunction of the equality $K \cap U = 1$, which holds by construction of $U$, and the inequality $|K| \cdot 1 \neq 0$, which holds in $R$ by assumption. It follows that
	\begin{equation}
		\pi_U^*f_w(g) = \widetilde{f_w} \pi_W (g) = \widetilde{f_w}(g_W) \neq 0
	\end{equation}
	so in particular $f(g) \neq 0$, contradicting the assumption that $g \in \Ker(f)$.
	
	\textbf{Case 2:} There are only edges in $\Int(\gamma) - \{v\}$.
	
	In that case, we may assume that $\Int(\gamma) - \{v\} \subseteq \wE = E(\Xi) - E(\Theta)$, as all stable letters associated to edges in $E(\Theta)$ are trivial. Moreover, $\Int(\gamma) \cap \wE$ is non-empty by the assumption that $g \notin H$. Choose any $e \in \wE$ and write $\Lambda = \langle t_e \rangle \leq \Gamma$ as well as $\Delta = \Gamma(\Int(\gamma) - \{e\})$, so that $\gamma \in \langle \Lambda, \Delta \rangle$ but $\gamma \notin \Delta$. Proceeding exactly as in Case 1 with the definition of $f_e$ in terms of the graph of profinite groups $\hgx$ and $K = 1$, we find that
	\begin{equation}
		\pi_U^*f_e(g) = \widetilde{f_e} \pi_W (g) = \widetilde{f_e}(g_W) \neq 0
	\end{equation}
	so in particular $f(g) \neq 0$, contradicting the assumption that $g \in \Ker(f)$. We conclude that $\Ker(f) = H$ and the proof is complete.
\end{proof}
\begin{corollary}\label{Cor::Amalgam}
    Let $G = H \amalg_K L$ be a profinite free product with amalgamation along a finite group $K$. For any finite ring $R$ whose characteristic does not divide the order of $K$ there exists a continuous derivation $f \colon G \to  R[[G]]$ with $\Ker(f) = H$.
\end{corollary}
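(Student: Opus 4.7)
The plan is to apply Theorem \ref{Thm::Subgroup Accessibility} directly by exhibiting $H$ as a vertex group in a graph-of-profinite-groups decomposition of $G$ whose underlying graph has exactly one edge. First I would let $\Xi$ be the graph with two vertices $u, v$ and a single edge $e$ satisfying $d_0(e) = u$ and $d_1(e) = v$, taking the spanning tree $\Theta = \Xi$ (since $\Xi$ is a tree). Then I would define the graph of profinite groups $\gxt$ by $\Gs(u) = H$, $\Gs(v) = L$, $\Gs(e) = K$, with edge inclusions $\partial_0 \colon K \hookrightarrow H$ and $\partial_1 \colon K \hookrightarrow L$ the canonical monomorphisms specified by the amalgamated product structure.

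Next, I would verify that $\PT \cong G$. Unpacking the definition from Section \ref{Sec::Prelim:Trees}, the stable letter $t_e$ is trivial (as $e \in E(\Theta)$), so $\PT$ is the quotient of the profinite coproduct $H \amalg L$ by the closed normal subgroup generated by $\{\partial_1(k)\partial_0(k)^{-1} : k \in K\}$. This is exactly the profinite pushout $H \amalg_K L = G$ (cf.\ \cite[Exercise 9.2.7]{RZ}), so the identification is immediate.

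Having made this identification, $H = \Gs(u)$ appears as a vertex group of $\gxt$, the edge group $K = \Gs(e)$ is finite with $|K| \neq 0$ in $R$, and $\Xi$ has $n = 1$ edge, so $H$ is accessible within $G$ via $\gxt$. Applying Theorem \ref{Thm::Subgroup Accessibility} yields a continuous right derivation $f \colon G \to R[[G]]^1 = R[[G]]$ with $\Ker(f) = H$, which is the claim. I do not anticipate any serious obstacle: the entire content is carried by Theorem \ref{Thm::Subgroup Accessibility}, and the corollary is a repackaging of its single-edge case where the $n$-fold free module collapses to $R[[G]]$ itself.
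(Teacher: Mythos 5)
Your proof is correct and is exactly the paper's (implicit) argument: the corollary is stated without proof as the single-edge case of Theorem \ref{Thm::Subgroup Accessibility}, realising $G = H \amalg_K L$ as the profinite fundamental group of the one-edge graph of profinite groups with vertex groups $H$, $L$ and edge group $K$ (precisely the example drawn in Section \ref{Sec::Prelim:Trees}), so that $n = 1$ and $R[[G]]^n = R[[G]]$. The only clause of the accessibility definition you do not address is (iii), strong completeness of the vertex groups; the corollary's statement omits this hypothesis as well, so your argument matches the paper's intent precisely.
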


\section{Recognition of Amalgamated Factors} \label{Sec::Amalgams}
In this section, we prove Theorem~\ref{Thm::Recognition}. As an entre\'e, we commence with a few lemmata whereto we outsource the most pungent Bass--Serre theoretic ingredients of the stew which constitutes the proof of Theorem~\ref{Thm::Recognition}.

\begin{lemma}\label{Lem::Bass--Serre Magic 1}
    Let $\gxt$ be a graph of groups over a finite graph $\Xi$ with finite edge groups whose fundamental group $\Gamma = \PAT$ is finitely generated virtually free. For every vertex $v \in V(\Xi)$, there exists a graph of groups $\hys$ over a finite graph $\Upsilon$ such that $\Gamma = \PA[\hys]$, there exists a vertex $u \in \Upsilon$ with $\Hs(u) = \Gs(v)$ and $\Hs(w)$ is finite whenever $w \in V(\Upsilon) - \{u\}$.
\end{lemma}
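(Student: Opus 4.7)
The plan is to refine the given graph of groups decomposition by replacing each vertex group $\Gs(w)$ for $w \neq v$ with its Karrass--Pietrowski--Solitar decomposition into a graph of finite groups. The first step is to note that every $\Gs(w)$, being a finitely generated subgroup of the virtually free group $\Gamma$ (and hence itself virtually free by a classical result of Karrass--Solitar), admits by Theorem \ref{Thm::KPS} a graph of finite groups decomposition $(\Gs_w, \Xi_w, \Theta_w)$ with $\Gs(w) = \PA[(\Gs_w, \Xi_w, \Theta_w)]$.

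Next, I would attach the edges of $\Xi$ to the refinements in a compatible manner. For each edge $e \in E(\Xi)$ with $d_i(e) = w \neq v$, the image $\partial_i(\Gs(e))$ is a finite subgroup of $\Gs(w)$; by the abstract analogue of Proposition \ref{Prop::Finite Group Profinite Tree}, its action on the Bass--Serre tree of $(\Gs_w, \Xi_w, \Theta_w)$ fixes a vertex, so there exist $u^i_e \in V(\Xi_w)$ and $h^i_e \in \Gs(w)$ such that $\partial_i(\Gs(e)) \leq h^i_e \, \Gs_w(u^i_e) \, (h^i_e)^{-1}$. I would then build $\Upsilon$ as the graph consisting of a distinguished vertex $u$, the disjoint union of all the $\Xi_w$ for $w \neq v$, and one edge $\overline{e}$ for each $e \in E(\Xi)$, with $d_i(\overline{e})$ equal to $u$ if $d_i(e) = v$ and to $u^i_e$ otherwise. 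Set $\Hs(u) = \Gs(v)$, $\Hs(x) = \Gs_w(x)$ whenever $x \in \Xi_w$, and $\Hs(\overline{e}) = \Gs(e)$, with edge inclusions given by $\partial_i$ where $d_i(e) = v$ and by the conjugate $(h^i_e)^{-1} \partial_i(\cdot) h^i_e$ otherwise. The spanning tree $\Sigma$ is obtained by taking the union of a spanning tree of $\Xi$ and a lift of $\Theta_w$ inside each copy of $\Xi_w$, possibly omitting some of the new edges $\overline{e}$.

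Finally, I would verify that $\PA[\hys] \cong \Gamma$. The cleanest route is via Bass--Serre trees: construct the tree $\widetilde{\Upsilon}$ by equivariantly blowing up each vertex $\gamma \cdot w$ in the standard tree of $\gxt$ into a copy of the tree of $(\Gs_w, \Xi_w, \Theta_w)$, attaching each edge of the old tree to a vertex of the new copy dictated by the $h^i_e$. Then $\Gamma$ acts on $\widetilde{\Upsilon}$ with finite edge stabilisers, the vertex stabilisers are either conjugates of $\Gs(v)$ or conjugates of the finite groups $\Gs_w(x)$, and the quotient is precisely $\Upsilon$, so the Bass--Serre structure theorem identifies $\Gamma$ with $\PA[\hys]$. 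Alternatively, one may compare the Bass--Serre presentations directly, checking that conjugating the edge inclusions by $h^i_e$ can be absorbed into a redefinition of the stable letters corresponding to edges not in $\Sigma$.

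The main obstacle, and the step requiring most care, is this last verification: one must track how the stable letters of $\Upsilon$ interact with the new edge inclusions in the blown-up graph, and confirm that no spurious relations are introduced. The tree-blowup perspective circumvents direct manipulation of presentations, but it requires one to check equivariance and that the resulting quotient graph of groups is indeed $\hys$; either route is routine Bass--Serre bookkeeping, but the key structural input is that finite subgroups act on trees with fixed points, which is what allows the edges of $\Xi$ to be cleanly reattached within the KPS refinements.
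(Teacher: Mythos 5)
Your proposal is correct and follows essentially the same strategy as the paper: refine via Theorem \ref{Thm::KPS}, use the fixed-point property of finite subgroups acting on trees to locate attachment vertices for the edge groups, and reattach with conjugated edge inclusions. The only differences are organisational --- the paper blows up one infinite vertex group at a time by induction and verifies the resulting isomorphism of fundamental groups by an explicit presentation-level computation (which is where the component-dependent conjugating elements $l_{\varepsilon(\cdot)}$ and the final global conjugation arranging $\Hs(u) = \Gs(v)$ on the nose come in), whereas you blow up all vertices simultaneously and appeal to the equivariant tree-refinement formalism --- and both routes are sound.
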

\begin{proof}
    Having chosen a spanning tree $\Theta$ of $\Xi$, we identify each $\Gs(x)$ for $x \in \Xi$ with its image in $\Gamma$. Choose a vertex $v \in V(\Xi)$ and write $\Delta = \Gs(v)$. We proceed by induction on the number of vertices $w \in V(X) - \{v\}$ whose vertex group $\Gs(w)$ has infinite cardinality. For the base case $n = 0$, there is nothing to prove. For the inductive step, assume that the result holds for $n$ and that $\gxt$ has $n+1$ vertex groups with infinite cardinality. Choose any $w \in V(X) - \{v\}$ whose vertex group $\Gs(w)$ has infinite cardinality $\Lambda := \Gs(w)$.
   
   The subgroup $\Lambda \leq \Gamma$ is virtually free and finitely generated: the latter follows via \cite[Proposition 2.13]{Bieri_Book} from the assumption that the fundamental group $\Gamma = \PA$ and all edge groups $\Gs(e)$ for $e \in E(\Xi)$ are finitely generated. Hence Theorem~\ref{Thm::KPS} yields a graph of groups $\lxt$, over a finite graph $\Zeta$ whose vertex groups are finite and which satisfies $\Lambda = \PA[\lxt]$. Let $\Psi$ be the associated structure tree. By assumption, the edge groups of $\gxt$ are finite, so any such edge group $\Gs(\eta)$ for $\eta \in E(\Xi)$ incident at $w$ must fix a vertex in the induced action of $\Gs(e) \leq \Lambda$ on the tree $\Psi$. It follows that for each edge $\eta \in E(\Xi)$ incident at $w$, there exists a vertex $\tau(\eta) \in V(\Zeta)$ such that the inclusion $\Gs(\eta) \to \Lambda$ factors as
   \[
   \Gs(\eta) \to l_\eta^{-1} \Ls(\tau(\eta)) l_\eta \to \Lambda
   \]
   where $l_\eta \in \Lambda$ is some conjugating factor. Thus, after possibly conjugating vertex groups in $\gxt$, we may remove the vertex $w$ and replace it with an embedded copy of the graph of groups $\lxt$, yielding a graph of groups $\hys$ over the graph $\Upsilon = (\Xi - \{w\}) \cup \Zeta$ which satisfies $\Gamma = \PA[\hys]$ and such that $\Delta$ is conjugate to $\Gs(u)$ for some vertex $u \in V(\Upsilon)$. The situation is illustrated in in Figure~\ref{Fig::BM1}; we refer the reader to \cite[Lemma 4.12]{Guirardiel_Levitt} for details of such a construction.\begin{figure}
	\captionsetup{width=.85\textwidth}
	\centering
	\vspace{-1.5cm}
	\begin{subfigure}{1\textwidth}
		\captionsetup{width=.85\textwidth}
		\centering
			\begin{tikzpicture}
					\pgfplotsset{
					width=1\textwidth
				}
			\begin{axis}[axis equal, axis lines=none,view={15}{20}]
				\addplot3 [only marks] coordinates {
					(1,0,0)
					(2,1.7321,0)
					(2,-1.7321,0)
					
					(-1,0,0)
					(-2,1.7321,0)
					(-2,-1.7321,0)
					
					(0.25,0,1.5)
					(1.375,0.6495,1.5)
					(1.375,-0.6495,1.5)
				};
				
				\addplot3 [no marks] coordinates {(1,0,0)(-1,0,0)};
				\draw[densely dotted] (axis cs: 1,0,1.5) circle[radius = {transformdirectionx(1)}];
				
				\addplot3 [no marks] coordinates {(1,0,0)(2,1.7321,0)};
				\addplot3 [no marks] coordinates {(1,0,0)(2,-1.7321,0)};
				
				\addplot3 [no marks] coordinates {(2,1.7321,0) (3,1.7321,0)};
				\addplot3 [no marks] coordinates {(2,1.7321,0) (1.4226,2.7321,0)};
				
				\addplot3 [no marks] coordinates {(2,-1.7321,0) (3,-1.7321,0)};
				\addplot3 [no marks] coordinates {(2,-1.7321,0) (1.4226,-2.7321,0)};
				
				
				\addplot3 [no marks] coordinates {(-1,0,0)(-2,1.7321,0)};
				\addplot3 [no marks] coordinates {(-1,0,0)(-2,-1.7321,0)};
				
				\addplot3 [no marks] coordinates {(-2,1.7321,0) (-3,1.7321,0)};
				\addplot3 [no marks] coordinates {(-2,1.7321,0) (-1.4226,2.7321,0)};
				
				\addplot3 [no marks] coordinates {(-2,1.7321,0) (-2,-1.7321,0)};
				
				\addplot3 [no marks] coordinates {(-2,-1.7321,0) (-3,-1.7321,0)};
				\addplot3 [no marks] coordinates {(-2,-1.7321,0) (-1.4226,-2.7321,0)};

				\addplot3 [no marks, densely dotted] coordinates {(1,0,0) (2,0,1.5)};
				\addplot3 [no marks, densely dotted] coordinates {(1,0,0) (0,0,1.5)};
				
				\addplot3 [no marks] coordinates {(0.25,0,1.5)(1.375,0.6495,1.5)};
				\addplot3 [no marks] coordinates {(0.25,0,1.5)(1.375,-0.6495,1.5)};
				\addplot3 [no marks] coordinates {(1.375,0.6495,1.5)(1.375,-0.6495,1.5)};

				\node [below left] at (axis cs: 1,0,0) { $\Lambda$};
				\node [below right] at (axis cs: -1,0,0) { $\Delta$};
				\node[above] at (axis cs: 1,1,1.5) { $(\Ls, \Zeta, \Zeta_0)$};

				\node [below right ] at (axis cs: 2,1.7321,0){\vertexs[j]};
				\node [below right ] at (axis cs: 2,-1.7321,0) {\vertexs[k]};
				
				\node [above left ] at (axis cs: -2, 1.7321,0) {\vertexs[ij]};
				\node [below left ] at (axis cs: -2, -1.7321,0) {\vertexs[ik]};
				
				\node [above ] at (axis cs: 0,0,0) {\edge[i]};
				\node [below right = -3pt] at (axis cs: 1.5,0.8660,0) {\edge[j]};
				\node [below left = -3pt] at (axis cs: 1.5,-0.8660,0) {\edge[k]};
			\end{axis}
		\end{tikzpicture}
			\vspace{-1cm}
	\caption{The graph of groups $(\Gs, \Xi, \Theta)$ with $\Gs(v) = \Delta$ and $\Lambda = \PA[(\Ls, \Zeta, \Zeta_0)]$. Edges incident at $w$ fix a vertex in the structure tree $\Psi$ of $(\Ls, \Zeta, \Zeta_0)$.}
\end{subfigure}

\begin{subfigure}{\textwidth}
	\captionsetup{width=.85\textwidth}
\centering
	\vspace{-1.5cm}
\begin{tikzpicture}
		\pgfplotsset{
		width=1\textwidth
	}
	\begin{axis}[axis equal, axis lines=none,view={10}{20}]
		\addplot3 [only marks] coordinates {
			
			(2,1.7321,0)
			(2,-1.7321,0)
			
			(-1,0,0)
			(-2,1.7321,0)
			(-2,-1.7321,0)
			
			(0.5,0,0)
			(1.25,0.4330,0)
			(1.25,-0.4330,0)
		};
		
		\addplot3 [no marks] coordinates {(0.5,0,0)(-1,0,0)};
		
		\addplot3 [no marks] coordinates {(1.25,0.4330,0)(2,1.7321,0)};
		\addplot3 [no marks] coordinates {(1.25,-0.4330,0)(2,-1.7321,0)};
		
		\addplot3 [no marks] coordinates {(2,1.7321,0) (3,1.7321,0)};
		\addplot3 [no marks] coordinates {(2,1.7321,0) (1.4226,2.7321,0)};
		
		\addplot3 [no marks] coordinates {(2,-1.7321,0) (3,-1.7321,0)};
		\addplot3 [no marks] coordinates {(2,-1.7321,0) (1.4226,-2.7321,0)};
		
		
		\addplot3 [no marks] coordinates {(-1,0,0)(-2,1.7321,0)};
		\addplot3 [no marks] coordinates {(-1,0,0)(-2,-1.7321,0)};
		
		\addplot3 [no marks] coordinates {(-2,1.7321,0) (-3,1.7321,0)};
		\addplot3 [no marks] coordinates {(-2,1.7321,0) (-1.4226,2.7321,0)};
		
		\addplot3 [no marks] coordinates {(-2,1.7321,0) (-2,-1.7321,0)};
		
		\addplot3 [no marks] coordinates {(-2,-1.7321,0) (-3,-1.7321,0)};
		\addplot3 [no marks] coordinates {(-2,-1.7321,0) (-1.4226,-2.7321,0)};

		\addplot3 [no marks] coordinates {(0.5,0,0)(1.25,0.4330,0)};
		\addplot3 [no marks] coordinates {(0.5,0,0)(1.25,-0.4330,0)};
		\addplot3 [no marks] coordinates {(1.25,0.4330,0)(1.25,-0.4330,0)};
		
		\draw[densely dotted] (axis cs: 1,0,0) circle[radius = {transformdirectionx(1.35)}];
		\node [below] at (axis cs: -1,0,0) { $ \qquad l_{\eta_i}^{-1}\Delta l_{\eta_i}$};
		
		\node [below right] at (axis cs: 2,1.7321,0){\vertexpowers[j]{j}};
		\node [below right] at (axis cs: 2,-1.7321,0) {\vertexpowers[k]{k}};
		
		\node [above left] at (axis cs: -2, 1.7321,0) {\vertexpowers[ij]{i}};
		\node [below left = -2pt] at (axis cs: -2, -1.7321,0) {\vertexpowers[ik]{i}};
		
		\node [above] at (axis cs: 0.5, 0,0) {$\tau(\eta_i)$};
		\node [below right = -2pt] at (axis cs: 1.25,0.4330,0) {$\tau(\eta_j)$};
		\node [below left = -2pt] at (axis cs: 1.25,-0.4330,0) {$\tau(\eta_k)$};
		
	\end{axis}
\end{tikzpicture}
\vspace{-2cm}
\caption{The graph of groups $\hys$ obtained by embedding $\lxt$ in place of the vertex $w$ in a conjugated copy of $\gxt$.}
\end{subfigure}
\caption{Constructing $\hys$ from $\gxt$.}
\label{Fig::BM1}
\end{figure} This new graph of groups decomposition $\Gamma = \PA[\hys]$ may be over a larger graph, but it has at most $n$ vertex groups of infinite cardinality. After possibly conjugating back the vertex groups of $\hys$, one obtains a graph of groups decomposition $\Gamma = \PA[(\Hs', \Upsilon, \Sigma)]$ with $\Delta = \Hs'(u)$. The result now follows by induction.
\end{proof}

\begin{lemma}\label{Lem::K conjugate to Chi}
    Let $\Delta$ be a finitely generated virtually free group and $\widehat{\Delta}$ its profinite completion. Any finite subgroup $K \leq \widehat{\Delta}$ is conjugate to some $\Chi \leq \Delta$ in $\widehat{\Delta}$.
\end{lemma}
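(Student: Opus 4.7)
The plan is to realise $\widehat{\Delta}$ as the profinite fundamental group of a graph of finite groups and then exploit the action of $\widehat\Delta$ on the associated profinite structure tree. Since $\Delta$ is finitely generated and virtually free, Theorem \ref{Thm::KPS} furnishes a graph of groups $\ggx$ over a finite graph $\Xi$ with finite vertex (and hence finite edge) groups such that $\Delta = \PA$. By Corollary \ref{Cor::profinite graph of finite groups}, the completion satisfies $\widehat{\Delta} \cong \PP$, and $\ggx$ is an injective graph of profinite groups, so each vertex group $\Gs(v)$ sits inside $\widehat\Delta$ as a finite subgroup contained in $\Delta$.

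Next, I fix a spanning tree of $\Xi$ and form the associated profinite structure tree $T$ with
\[
V(T) = \bigsqcup_{v \in V(\Xi)} \widehat{\Delta}\bigl/\Gs(v),
\]
on which $\widehat{\Delta}$ acts naturally by left multiplication, as described in Section \ref{Sec::Prelim:Trees}. By restriction, the finite subgroup $K \leq \widehat{\Delta}$ acts on $T$. Proposition \ref{Prop::Finite Group Profinite Tree} then guarantees that $K$ fixes a vertex of $T$, say $g\Gs(v) \in V(T)$ for some $g \in \widehat{\Delta}$ and $v \in V(\Xi)$. Unpacking the action, this fixed-point condition reads $K g \Gs(v) = g \Gs(v)$, i.e.\ $g^{-1} K g \leq \Gs(v)$.

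Since $\Gs(v)$ is (finite and) contained in $\Delta$ via the natural inclusion, setting $\Chi := g^{-1} K g$ produces a subgroup of $\Delta$ satisfying $K = g \Chi g^{-1}$ with $g \in \widehat\Delta$, which is exactly the conjugacy in $\widehat\Delta$ required by the lemma.

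I do not anticipate a serious obstacle here: the substance is entirely front-loaded into the cited results (KPS for the decomposition, Corollary \ref{Cor::profinite graph of finite groups} for identifying the completion with the profinite fundamental group, and Proposition \ref{Prop::Finite Group Profinite Tree} for the finite-group fixed point). The only small point of care is to use injectivity of $\ggx$ as a graph of profinite groups so that the vertex group $\Gs(v) \leq \Delta \hookrightarrow \widehat{\Delta}$ really coincides with its image in $\widehat{\Delta}$; this is handled by Corollary \ref{Cor::profinite graph of finite groups}.
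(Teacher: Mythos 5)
Your proposal is correct and follows essentially the same route as the paper: Theorem \ref{Thm::KPS} to write $\Delta$ as a finite graph of finite groups, Corollary \ref{Cor::profinite graph of finite groups} to identify $\widehat{\Delta}$ with the profinite fundamental group, and Proposition \ref{Prop::Finite Group Profinite Tree} applied to the profinite structure tree to extract a fixed vertex $g\Gs(v)$ with $g^{-1}Kg \leq \Gs(v) \leq \Delta$. This matches the paper's argument step for step.
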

\begin{proof}
    By Theorem~\ref{Thm::KPS}, the finitely generated virtually free group $\Delta$ decomposes as a graph of finite groups $\Delta = \PAT$ over a finite graph $\Xi$. It follows via Corollary \ref{Cor::profinite graph of finite groups} that the profinite completion $\widehat{\Delta}$ decomposes in the profinite category as an injective graph of finite groups $\Delta = \PAT$ over the finite graph $\Xi$. An application of \cite[Theorem 7.1.2]{Ribes_Graphs} then yields the result.
\end{proof}

We shall apply Lemma~\ref{Lem::K conjugate to Chi} in conjunction with the following result, which follows from an argument analogous to that of Lemma~\ref{Lem::Bass--Serre Magic 1}. Throughout the remainder of this section, we shall use the notation $(-)^g$ to denote the conjugation automorphism $g^{-1}(-)g$ associated to an element $g$ in a group $\Gamma$.

\begin{lemma}\label{Lem::Bass--Serre--2}
    Let $\Gamma = \PAT$ be the fundamental group of a graph of groups over a finite graph $\Xi$ with finite edge groups and let $\Delta = \Gs(v) \leq \Gamma$ be the image of a vertex group for some $v \in V(\Xi)$. Assume that there exists a finite subgroup $\Chi \leq \Delta$ such that for each edge $e \in E(\Xi)$ incident at $v$, the edge group $\Gs(e)$ is conjugate in $\Delta$ to a subgroup of $\Chi$. Then there exists $\Lambda \leq \Gamma$ such that $\Gamma = \Delta \am \Lambda$.
\end{lemma}
\begin{proof}
    We shall construct a graph of groups $\hys$ from $\gxt$ by blowing up the vertex $v$ into the amalgam $\Delta \am \Chi$, akin to the proof of \cite[Proposition 2.2]{Guirardiel_Levitt}. Indeed, let $\Upsilon = \Xi \cup \{e,w\}$ be the graph obtained from $\Xi$ by adjoining a fictitious edge $e$ and a fictitious vertex $w$ with $d_0(e) = v$ and $d_1(e) = w$. Fix the spanning tree $\Sigma = \Theta \cup \{e\}$ for $\Upsilon$. Given any point $x \in \Xi - \{v\}$, denote by $\eta(x)$ the unique edge incident at $v$ in the minimal subtree $[x, v]$ of $\Sigma$ containing $v$ and $x$. By assumption, for each edge $\eta$ incident at $v$ there exists $\delta(\eta) \in \Delta$ such that $\Gs(\eta)^{\delta(\eta)} \leq \Chi$. Consider now the graph of groups $(\Hs, \Upsilon)$ given by
    \[
    \Hs(x) = \begin{cases}
        \Delta, &x = w\\
        \Chi, &x = e\\
        \Gs(x)^{\delta(\eta(x))}, & x \in \Xi
    \end{cases}
    \]
    for $x \in \Upsilon$, and whose inclusions are the appropriate conjugates of inclusions in $\ggx$, the postulated inclusions $\Gs(\eta)^{\delta(\eta)} \to \Chi$, as well as the natural inclusions $\Chi \to \Delta$ and $\Chi \to \Chi$. The situation is illustrated in Figure~\ref{Fig::HGX from GGX}; one verifies that this forms a well-defined graph of groups and that $\Gamma = \PA[\hys]$.\begin{figure}
\vspace{-2.5cm}
\begin{tikzpicture}
    \pgfplotsset{
        width=1\textwidth
    }
    \begin{axis}[axis equal, axis lines=none,view={15}{20}]
        \addplot3 [only marks] coordinates {
            
            (1,0,1.5)
            
            (1,0,0)
            (2,1.7321,0)
            
            (2,-1.7321,0)
            
            (-1,0,0)
            (-2,1.7321,0)

            (-2,-1.7321,0)
        };
        
        \addplot3 [no marks, densely dashed] coordinates {(1,0,0)(1,0,1.5)};
        \addplot3 [no marks] coordinates {(1,0,0)(-1,0,0)};
        
        \addplot3 [no marks] coordinates {(1,0,0)(2,1.7321,0)};
        \addplot3 [no marks] coordinates {(1,0,0)(2,-1.7321,0)};
        
        \addplot3 [no marks] coordinates {(2,1.7321,0) (3,1.7321,0)};
        \addplot3 [no marks] coordinates {(2,1.7321,0) (1.4226,2.7321,0)};
        
        \addplot3 [no marks] coordinates {(2,-1.7321,0) (3,-1.7321,0)};
        \addplot3 [no marks] coordinates {(2,-1.7321,0) (1.4226,-2.7321,0)};
        
        
        \addplot3 [no marks] coordinates {(-1,0,0)(-2,1.7321,0)};
        \addplot3 [no marks] coordinates {(-1,0,0)(-2,-1.7321,0)};
        
        \addplot3 [no marks] coordinates {(-2,1.7321,0) (-3,1.7321,0)};
        \addplot3 [no marks] coordinates {(-2,1.7321,0) (-1.4226,2.7321,0)};
        
        \addplot3 [no marks] coordinates {(-2,-1.7321,0) (-3,-1.7321,0)};
        \addplot3 [no marks] coordinates {(-2,-1.7321,0) (-1.4226,-2.7321,0)};
        
        \addplot3 [no marks] coordinates {(-2,1.7321,0) (-2,-1.7321,0)};

        \node [above left] at (axis cs: 1,0,0) { $\Chi$};
        \node [below left] at (axis cs: 1,0,1.5) { $\Delta$};
        \node [below] at (axis cs: -1,0,0) {$\qquad$ \vertexpower[i]{i}};
        \node [below right] at (axis cs: 2,1.7321,0){\vertexpower[j]{j}};
        \node [below right] at (axis cs: 2,-1.7321,0) {\vertexpower[k]{k}};
        
        \node [above left] at (axis cs: -2, 1.7321,0) { \vertexpower[ij]{i}};
        \node [below left] at (axis cs: -2, -1.7321,0) {\vertexpower[ik]{i}};   
    \end{axis}
\end{tikzpicture}
\vspace{-2cm}
	\caption{Obtaining $(\mathcal{H}, \Upsilon, \widetilde{\Theta})$ from $(\Gs, \Xi, \Theta)$. The conjugation factors $\delta(\tau)$ depend only on the connected component of $\Xi - \{v\}$.}
	\label{Fig::HGX from GGX}
\end{figure}The fundamental group $\Lambda = \PA[(\Hs, \Xi, \Theta)]$ of the graph of groups $\Hs$ restricted to $\Xi \subseteq \Upsilon$ with spanning tree $\Theta$ then gives $\Gamma = \Delta \am \Lambda$, as postulated.
\end{proof}
Finally, we prove the main theorem of the present work, Theorem~\ref{Thm::Recognition}. The idea is to pass from a profinite splitting to a continuous derivation using Theorem~\ref{Thm::Subgroup Accessibility}, restrict the derivation to a projective abstract module using Theorem~\ref{Thm::Homological Algebra}, and convert the latter to an abstract graph of groups using Theorem~\ref{Thm::Dicks_Dunwoody}. To mould the obtained abstract graph of groups into the required form, we appeal directly to the profinite structure tree associated to the initial profinite splitting, as well as the abstract Bass--Serre theoretic manipulations established in the present section.

\MTB*

\begin{proof}
The direction $(b) \Rightarrow (a)$ holds in much greater generality: if $\Gamma = \Delta \am \Lambda$ is residually finite and $\Chi^h = K$ is finite then $\Gh = \Db \amalg_{\upchi} \overline{\Lambda} = \Db \amalg_K \overline{\Lambda}^h$, where the first equality holds by Proposition~\ref{Prop::completion of fundamental group} and the second equality holds as conjugation by $h \in \Db$ is an automorphism of $\Gh$ which fixes $\Db$.
 
For the direction $(a) \Rightarrow (b)$, assume that $\Delta \leq \Gamma$ are both finitely generated virtually free groups and that the profinite completion of $\Gamma$ is a profinite free product  $\Gh = \Db \amalg_K L$ with amalgamation along a finite subgroup $K \leq \Gh$. Let $T$ be the profinite structure tree associated to this splitting, and write $\alpha_\Delta \in V(T)$ to denote the vertex associated to the coset $\Db$ in $T$. As noted in Lemma~\ref{Lem::Induced Topology}, $\Delta$ is closed in the profinite topology on $\Gamma$, and so $\Db \cap \Gamma = \Delta$. Let $p$ be a prime number which is strictly greater than the order of $K$ and consider $\Fp[[\Gh]]$ as a profinite right $\Gh$-module. By Corollary \ref{Cor::Amalgam}, there exists a continuous derivation
    \[
    f \colon \Gh \to \Fp[[\Gh]]
    \]
satisfying $\Ker(f) = \Db$. Consider now $\Fp[[\Gh]]$ as an $\Fp[\Gamma]$-module, which is isomorphic to a direct union of free $\Fp[\Gamma]$-modules by Theorem~\ref{Thm::Homological Algebra}. As $\Gamma$ is finitely generated, the restriction $f\at{\Gamma} \colon \Gamma \to \Fp[[\Gh]]$ is a derivation with finitely generated image, so this image must in fact lie in some factor of the direct union, and there exists a projective right $\Fp[\Gamma]$-module $P$ such that
\[
	f\at{\Gamma} \colon \Gamma \to P
\]
forms a derivation\footnote{Technically, Theorem~\ref{Thm::Homological Algebra} deals with left modules, while Corollary \ref{Cor::Amalgam} and Theorem~\ref{Thm::Dunwoody} are concerned with right derivations. However, right derivations in a right module are equivalent to left derivations in a left module over the opposite group (cf. Section \ref{Sec::Prelim:Juxta}). Thus $f$ corresponds to a continuous left derivation $f^{\mathrm{op}} \colon \Gh \to \Fp[[\Gh]]$ in the left $\Gh^{\mathrm{op}}$-module $\Fp[[\Gh]]$. But $\Fp[[\Gh]]$ is isomorphic to a direct union of free left $\Fp[\Gamma^{\mathrm{op}}]$-modules and the restriction of $f^{\mathrm{op}}$ to $\Gamma^{\mathrm{op}}$ forms a left derivation $f^{\mathrm{op}} \at{\Gamma^{\mathrm{op}}} \colon \Gamma^{\mathrm{op}} \to P$
in some projective left $\Fp[\Gamma^{\mathrm{op}}]$-submodule $P$ of $\Fp[[\Gh]]$, which satisfies $\Ker(f^{\mathrm{op}} \at{\Gamma^{\mathrm{op}}}) = \overline{\Delta^{\mathrm{op}}} \cap \Gamma^{\mathrm{op}} = \Delta^{\mathrm{op}}$. Now the latter is equivalent to the existence of a right derivation $f\at{\Gamma} \colon \Gamma \to P$ in the projective right $\Fp[\Gamma]$-module $P$, which satisfies $\Ker(f\at{\Gamma}) = \Delta$.} with $\Ker(f\at{\Gamma}) = \Delta$. By Theorem~\ref{Thm::Dicks_Dunwoody}, it follows that there exists a graph of groups $\ggx$ over a finite graph $\Xi$ with finite edge groups such that
\[
	\Gamma = \PA
\]
and $\Gs(v) = \Ker(f\at{\Gamma}) = \Delta$ for some vertex $v \in V(\Xi)$. Using Lemma~\ref{Lem::Bass--Serre Magic 1}, we may reduce to the case where $\Gs(x)$ is finite whenever $x \in \Xi - \{v\}$. We fix a spanning tree $\Theta$ for $\Xi$; whenever $x \in \Xi$, we shall identify $\Gs(x)$ with its image $\Gs(x) \leq \Gamma \cong \PAT$ under the canonical inclusion. After possibly replacing stable letters with their inverses, we may assume that each edge $\eta \in E(\Xi)$ incident at $v$ starts at $v$, i.e. $d_0(\eta) = v$ whenever $v \in \{d_0(\eta), d_1(\eta)\}$. Moreover, after shrinking superfluous edges, we may assume that there is no edge $\eta$ incident at $v$ whose other endpoint $\tau$ satisfies $\Gs(\tau)^{t_\eta} \leq \Gs(v) = \Delta$. By Lemma~\ref{Lem::Induced Topology}, there is an isomorphism $\Db \cong \widehat{\Delta}$ which fixes $\Delta = \Gs(v)$. It follows using Lemma~\ref{Lem::K conjugate to Chi} that there is a finite subgroup $\Chi$ of the abstract group $\Delta$ which is conjugate to $K$ in $\Db$, i.e. there exists $h \in \Db$ such that $\Chi = K^h$ holds.

In light of Lemma~\ref{Lem::Bass--Serre--2}, it will suffice now to show that for each edge $\eta \in E(\Xi)$ which is incident at $v$, there exists $\delta(\eta) \in \Delta$ such that $\Gs(\eta)^{\delta(\eta)} \leq \Chi$. In fact, it will suffice to show that there exists $y(\eta) \in \Db$ with this property: the existence of $\delta(\eta) \in \Delta$ which conjugates $\Gs(\eta)$ to a subgroup of $\Chi$ then follows from the fact that virtually free groups are subgroup conjugacy separable (see \cite[Corollary 1.2]{Chagas_Zalesskii}) and $\Db \cong \widehat{\Delta}$ (see Lemma~\ref{Lem::Induced Topology}). Choose any $\eta \in E(\Xi)$ incident at $v$ and write $\tau$ for its other endpoint, so that $d_0(\eta) = v$ and $d_1(\eta) = \tau$ by the orientation chosen above; write also $t = t_\eta$ for simplicity. If $\tau = v$, then $\Gs(\eta) \leq \Delta \cap \Delta^t$, and $t \notin \Delta$ by \cite[Theorem IV.2.1]{lyndon_schupp}. In this case, there exists $g \in \Db$ with $\Gs(\eta) \leq K^g$ by \cite[Corollary 7.1.5(b)]{Ribes_Graphs}, and $y(\eta) = gh$ yields the claim. Assume hence $\tau \neq v$, so in particular $\Gs(\tau)$ must be finite. It follows by Proposition~\ref{Prop::Finite Group Profinite Tree} that the action of $\Gs(\tau)$ on the profinite structure tree $T$ associated to the splitting $\Gh = \Db \amalg_K L$ must fix a vertex $\alpha \in V(T)$. If $\alpha = t \alpha_\Delta$ then $\Gs(\tau)^{t} \leq \Stab[\Gh]{\alpha_\Delta} \cap \Gamma = \Db \cap \Gamma = \Delta$, contradicting the assumption that no vertex group adjacent to $v$ maps to a subgroup of $\Gs(v) = \Delta$ when conjugated by the stable letter of a connecting edge. Thus, we may assume further that $\alpha \neq t \alpha_\Delta$ and the minimal subtree $[t^{-1}\alpha, \alpha_\Delta]$ of $T$ containing $t^{-1}\alpha$ and $\alpha_\Delta$ must not be a singleton. Moreover, following Remark \ref{Rem::Closed Edge Set}, the edge set $E(T)$ is closed in $T$, so $E([t^{-1}\alpha,\alpha_\Delta]) = E(T) \cap [t^{-1}\alpha,\alpha_\Delta]$ must be closed in $[t^{-1}\alpha,\alpha_\Delta]$. It follows by \cite[Proposition 2.1.6(c)]{Ribes_Graphs} that there is an edge $\widetilde{\eta} \in [t^{-1}\alpha,\alpha_\Delta]$ which is incident at $\alpha_\Delta$. Then $\widetilde{\eta}$ is the edge in $T$ associated to the coset $gK$ for some $g \in \Db$, and we obtain
	\begin{equation}
		\Gs(\eta) \leq \Gs(v) \cap \Gs(\tau)^t \leq \Stab[\Gh]{\alpha_\Delta} \cap \Stab[\Gh]{t^{-1}\alpha} \leq \Stab[\Gh]{\widetilde{\eta}} = K^{g^{-1}}
	\end{equation}
	where the third inclusion derives from \cite[Corollary 4.1.6]{Ribes_Graphs}. Thus, $y(\eta) = gh$ is an element of $\Db$ which conjugates $\Gs(\eta)$ into $\Chi$. Via the subgroup conjugacy separability of $\Delta$, one now obtains $\delta(\eta) \in \Delta$ with $\Gs(\eta)^{\delta(\eta)} \leq \Chi$. An application of Lemma~\ref{Lem::Bass--Serre--2} then yields $\Gamma = \Delta \am \Lambda$ for some $\Lambda \leq \Gamma$ and $\Chi$ is conjugate to $K$ via $h \in \Db$. We conclude that condition $(b)$ holds and the proof is complete. 
\end{proof}

\bigskip 
\printbibliography
\end{document}